\numberwithin{equation}{section}
\def\OO{{\mathcal O}}
\def\al{\alpha}
\def\ga{\gamma}
\newcommand{\?}{\negthickspace}
\newcommand{\C}{{\ensuremath{\mathbb  C}}}
\newcommand{\Cbar}{\ensuremath{\overline{\mathbb  C}}}
\newcommand{\Chat}{{\ensuremath{\widehat{\mathbb  C}}}}
\newcommand{\Cstar}{\ensuremath{{{\mathbb  C}^*}}}
\renewcommand{\d}{d}
\newcommand{\dz}{\ensuremath{dz}}
\newcommand{\dzbar}{\ensuremath{d\overline{z}}}
\newcommand{\D}{\ensuremath{{\mathbb   D}}}
\newcommand{\Dbar}{\ensuremath{{\overline{\mathbb  D}}}}
\newcommand{\e}{\ensuremath{{\operatorname{e}}}}
\newcommand{\eps}{{\epsilon}}
\renewcommand{\H}{\ensuremath{\mathbb  H}}
\newcommand{\Hplus}{\ensuremath{{{\mathbb  H}_+}}}
\newcommand{\Id}{\ensuremath{{\operatorname{Id}}}}
\newcommand{\laz}{\ensuremath{{\la_0}}}
\newcommand{\mapfromto}[3]{\hbox{\ensuremath{#1 \colon #2 \longrightarrow #3}}}
\newcommand{\mate}{\perp \! \! \! \perp}
\newcommand{\N}{\ensuremath{\mathbb  N}}
\newcommand{\Q}{\ensuremath{\mathbb  Q}}
\newcommand{\R}{\ensuremath{\mathbb  R}}
\newcommand{\simla}{\ensuremath{\overset{\lambda}{\sim}}}
\newcommand{\simlaz}{\ensuremath{\overset{\lambda_0}{\sim}}}
\newcommand{\simlad}{\ensuremath{\overset{\lambda^d}{\sim}}}
\newcommand{\sm}{\ensuremath{{\smallsetminus}}}
\newcommand{\Sm}{\ensuremath{{\setminus}}}
\newcommand{\Sen}{\ensuremath{{{\mathbb  S}^1}}}
\newcommand{\Seninfty}{\ensuremath{{{\mathbb  S}^1_\infty}}}
\newcommand{\Sqt}{\ensuremath{{S/\negthickspace\sim}}}
\newcommand{\Sqte}{\ensuremath{{S_1/\negthickspace\sim_1}}}
\newcommand{\Sqtt}{\ensuremath{{S_2/\negthickspace\sim_2}}}
\newcommand{\Sto}{\ensuremath{{{\mathbb  S}^2}}}
\newcommand{\wbar}{\ensuremath{{\overline{w}}}}
\newcommand{\whR}{\ensuremath{\widehat R}}
\newcommand{\wtH}{\ensuremath{{\widetilde H}}}
\newcommand{\wtP}{\ensuremath{{\widetilde P}}}
\newcommand{\wtR}{\ensuremath{{\widetilde R}}}
\newcommand{\zbar}{\ensuremath{{\overline{z}}}}
\newcommand{\Z}{\ensuremath{\mathbb  Z}}
\def\al{\alpha}
\def\ga{\gamma}
\def\si{\sigma}
\def\eps{\epsilon}
\def\la{\lambda}
\def\La{\Lambda}
\def\R{\mbox{$\mathbb R$}}
\def\H{\mbox{$\mathbb H$}}
\def\C{\mbox{$\mathbb C$}}
\def\Z{\mbox{$\mathbb Z$}}
\def\Q{\mbox{$\mathbb Q$}}
\def\N{\mbox{$\mathbb N$}}
\def\D{\mbox{$\mathbb D$}}
\def\Bottcher{B{\"o}ttcher}
\def\Mobius{M{\"o}bius}
\theoremstyle{plain}
\newtheorem{newthm}{Theorem}
\newtheorem{theorem}{Theorem}[section]
\newtheorem{lemma}[theorem]{Lemma}
\newtheorem{proposition}[theorem]{Proposition}
\newtheorem{corollary}[theorem]{Corollary}
\newtheorem{conjecture}{Conjecture}
\newtheorem{defthm}[theorem]{Definition and Theorem}
\theoremstyle{definition}
\newtheorem{deflem}[theorem]{Definition and Lemma}
\newtheorem{definition}[theorem]{Definition}
\theoremstyle{remark}
\newtheorem{example}[theorem]{Example}
\newtheorem{remark}[theorem]{Remark}
\newtheorem*{claim1*}{Claim 1}
\newtheorem*{claim2*}{Claim 2}
\newtheorem*{claim3*}{Claim 3}
\newtheorem*{claim4*}{Claim 4}
\newcommand{\ALIGN}{\begin{align*}}
\newcommand{\ENDALIGN}{\end{align*}}
\newcommand{\ENUM}{\begin{enumerate}}
\newcommand{\ENUMa}{\begin{enumerate}[a.]}
\newcommand{\ENUMi}{\begin{enumerate}[i)]}
\newcommand{\ENDENUM}{\end{enumerate}}
\newcommand{\ITMZ}{\begin{itemize}}
\newcommand{\ENDITMZ}{\end{itemize}}
\newcommand{\EQN}[1] { \begin{equation}\label{#1} }
\newcommand{\ENDEQN}{\end{equation}}
\newcommand{\THM}{\begin{theorem}}
\newcommand{\REFEXA}[1] { \begin{example}\label{#1} }
\newcommand{\ENDEXA}{\end{example}}
\newcommand{\REM}{ \begin{remark}}
\newcommand{\ENDREM}{\end{remark}}
\newcommand{\REFTHM}[1] { \begin{theorem}\label{#1} }
\newcommand{\RTHM}[1] { \begin{theorem}[#1] }
\newcommand{\RREFTHM}[2] { \begin{theorem}[#1]\label{#2} }
\newcommand{\ENDTHM}{\end{theorem}}
\newcommand{\REFNTH}[1] { \begin{newthm}\label{#1} }
\newcommand{\ENDNTH}{\end{newthm}}
\newcommand{\REFPROP}[1]{\begin{proposition}\label{#1} }
\newcommand{\RREFPROP}[2]{\begin{proposition}[#1]\label{#2} }
\newcommand{\PROP}{\begin{proposition}}
\newcommand{\ENDPROP}{\end{proposition} }
\newcommand{\REFDEF}[1]{\begin{definition}\label{#1} }
\newcommand{\RREFDEF}[2]{\begin{definition}[#1]\label{#2} }
\newcommand{\DEF}{\begin{definition}}
\newcommand{\ENDDEF}{\end{definition} }
\newcommand{\REFLEM}[1]{\begin{lemma}\label{#1} }
\newcommand{\RREFLEM}[2]{\begin{lemma}[#1]\label{#2} }
\newcommand{\LEM}{\begin{lemma}}
\newcommand{\ENDLEM}{\end{lemma} }
\newcommand{\REFCOR}[1]{\begin{corollary}\label{#1} }
\newcommand{\COR}{\begin{corollary}}
\newcommand{\ENDCOR}{\end{corollary}}
\newcommand{\REFCONJ}[1]{\begin{conjecture}\label{#1} }
\newcommand{\CONJ}{\begin{conjecture}}
\newcommand{\ENDCONJ}{\end{conjecture}}
\newcommand{\RMRK}{\begin{remark}}
\newcommand{\ENDRMRK}{\end{remark}}
\newcommand{\REFDEFTHM}[1] { \begin{defthm}\label{#1} }
\newcommand{\RREFDEFTHM}[2] { \begin{defthm}[#1]\label{#2} }
\newcommand{\ENDDEFTHM}{\end{defthm}}
\newcommand{\corref}[1]{Corollary~\ref{#1}}
\newcommand{\defref}[1]{Definition~\ref{#1}}
\newcommand{\lemref}[1]{Lemma~\ref{#1}}
\newcommand{\thmref}[1]{Theorem~\ref{#1}}
\newcommand{\propref}[1]{Proposition~\ref{#1}}
\newcommand{\itemiref}[1]{(\ref{#1})}
\newcommand{\PROOF}{\begin{proof}}
\newcommand{\ENDPROOF}{\end{proof}}
\providecommand{\defn}[1]{\emph{#1}}
\newcounter{mylistnum}
\providecommand{\abs}[1]{\lvert#1\rvert}
\newcommand{\simhat}{\widehat{\sim}}
\newcommand{\wt}{{\tt w}}
\newcommand{\bt}{{\tt b}}
\def\mate{\hskip 1pt \bot \hskip -5pt \bot \hskip 1pt}
\newcommand{\ray}{{\operatorname{ray}}}
\newcommand{\simr}{\sim_\ray}
\title{On The Notions of Mating}
\author{Carsten Lunde Petersen}
\address{Carsten Lunde Petersen
  Institut for Natur, Systemer og Modeller
Bygn 27.2\\
Roskilde Universitet\\
Universitetsvej 1\\
DK-4000 Roskilde\\
Denmark} 
\author{Daniel Meyer}
\address{Daniel Meyer\\
Jacobs University Bremen\\ 
Campus Ring 1\\
28759 Bremen\\
Germany}
\date{\today}
\begin{document}

\maketitle

\noindent{Abstract:~}
The different notions of matings of pairs of equal degree polynomials 
are introduced and are related to each other as well as known results on matings.
The possible obstructions to matings are identified and related. 
Moreover the relations between the polynomials and their matings are discussed and proved. 
Finally holomorphic motion properties of slow-mating are proved.
\\
\\
\noindent{Resum{\'e}:~}
Les diff\'{e}rentes notions de accouplements de paires de polyn{\^o}mes 
de m{\^e}mes degr{\'e} sont introduits et sont reli{\'e}es les uns aux autres 
ainsi que les r{\'e}sultats connus concernant les accouplements.
Les obstructions possibles {\`a} les accouplements sont identifi{\'e}s et li{\'e}s.
En plus, les relations entre les polyn{\^o}mes et leurs accouplements 
sont discut{\'e}es et prouv{\'e}s.
Enfin des propri{\'e}t{\'e}s de mouvement holomorphes de l'accouplement lente 
sont prouv{\'e}s.

\section{Introduction}
The notion of mating of two polynomials of the same degree $d>1$ 
was introduced by Douady and Hubbard in order to understand and classify 
dynamically certain rational maps. Since its introduction and the first proven results by 
Tan Lei, Mary Rees and Mitsuhiro Shishikura, 
several more or less equivalent notions of matings have been introduced and in use.
This paper aims at introducing the different notions of matings and their applications.
We will discuss along the way the different issues which naturally arise in connection 
with the different definitions of matings. 

Fundamentally there are two different views on mating. 
\begin{itemize}
  \item 
  The constructive approach: 
  Mating is a procedure to construct new rational maps by combining two polynomials.
  \item 
  The descriptive approach: Mating is a way to understand the dynamics 
  of certain rational maps in terms of pairs of polynomials.
  \end{itemize}  
We shall pursue both views. In order to set the scene for the discussion properly 
let us start with reviewing the relevant background definitions and theorems. 
The reader novel to holomorphic dynamics will find full details and enlarged discussions in any 
of the monographs \cite{Milnor:ComplDyn}, \cite{CarlesonandGamelin}.

\section{Equivalence relations}
\label{sec:clos-equiv-relat}

In the construction of the mating we take the quotient of a highly
non-trivial equivalence relation. 
We shall thus take our starting point in a discussion of equivalence relations 
and the properties of quotients by equivalence relations.

\subsection{The lattice of equivalence relations}
\label{sec:latt-equiv-relat}

Let $R \subset S\times S$ be a relation on the set $S$. 
We shall follow the usual custom of writing $xRy$, $x\sim y$ or $x\sim_R y$ for $(x,y)\in R$.
The three forms  will be used interchangeably. 
Relations on $S$ are naturally \defn{partially ordered} by inclusion, 
i.e.,  the relation $R$ is \defn{bigger} than the
relation $Q$ if $Q\subset R$ or
$$
\forall\;x,y\in S: \qquad  xQy \Rightarrow xRy,
$$

The set of all relations on $S$ forms a \defn{lattice}
when equipped with this partial ordering. This means that
\defn{join} and \defn{meet} are well defined. Recall that
the join $Q \vee R=Q\cup R$ is the smallest relation bigger  
than $Q$ and $R$. Similarly the \defn{meet} $Q\wedge R= Q\cap R$ is
the biggest relation smaller than $Q$ and $R$. Similarly the join and
meet of an arbitrary family of relations is defined. Note that the meet
of a family of equivalence relations is again an equivalence
relation. 

Let $R$ be a relation on $S$. 
The \defn{equivalence relation generated} by $R$ 
is the smallest equivalence relation bigger than
$R$, i.e., the meet of all equivalence relations bigger than $R$. 
 
\subsection{Closed equivalence relations}
\label{sec:clos-equiv-relat-1}

The set of equivalence classes of an equivalence relation $R$ on a set $S$
forms a \emph{decomposition}, i.e., a family of pairwise disjoint
subsets of $S$ whose union is $S$. Clearly each decomposition of $S$
induces an equivalence relation on $S$ (two points of $S$ are
equivalent if and only if they are contained in the same set of the
decomposition). 
We denote by $[x]$ or more detailed $[x]_R$ the $R$-equivalence class of $x$. 
We denote by $\Sqt = S/R = \{[x] \mid  x\in S\}$
the space of equivalence classes or quotient space and by 
{\mapfromto {\Pi=\Pi_R} S \Sqt} the natural projection $\Pi(x) = [x]$.
When $S$ is a topological space we shall always assume that the quotient space 
$\Sqt$ is equipped with the quotient topology. 
That is a subset $U\subset \Sqt$ is open if and only if $\Pi^{-1}(U)$ is open in $S$.

Instead of quotient spaces one talks in geometric topology about
\emph{decomposition spaces}. The standard reference is Daverman's book
\cite{MR872468}. We will however stick to talking about equivalence
relations, instead of decompositions.

\begin{definition}[saturation]
  \label{def:saturation}
  Let $\sim$ be an equivalence relation on a set $S$. A set $U\subset
  S$ is called \emph{saturated} if $x\in U, y\sim x \Rightarrow y\in
  U$, equivalently $U= \Pi^{-1}(\Pi(U))$, equivalently $U$ is a union
  of equivalence classes. 

  The \emph{saturated interior} of a set $V\subset S$ is the set
  \begin{equation*}
    V^*:= \bigcup\{ [x] \mid [x]\subset V\},
  \end{equation*}
  i.e., the biggest saturated set contained in $V$. Note that $V^*$
  may be empty, even if $V$ is non empty.

  The \emph{saturation} of a set $A\subset S$ is the set
  \begin{equation*}
    A^\dagger:= \bigcup\{[x] \mid x\in A\} = \Pi^{-1}(\Pi(A)),
  \end{equation*}
  i.e., the smallest saturated set containing $A$. 
\end{definition}

Recall that a topological space $S$ is Hausdorff if and only if every two distinct points 
have disjoint neighborhoods or equivalently if and only if the diagonal 
$\Delta=\Delta_S =\{(x,x) \mid x\in S\}\subset S\times S$ is closed. 
Here as elsewhere we suppose the later equipped with the product topology.

\smallskip
In general there is very little that can be said about the quotient
space. However the standard assumption is that the equivalence
relation is \emph{closed}, the importance of which is shown by the
following.

\begin{deflem}
  \label{def:closed_eq}
  Let $S$ be a compact metric space. An equivalence relation $\sim$ on $S$ 
  is \defn{closed} if each $[x]$ is compact and one (hence all) of
  the following equivalent conditions is satisfied.
  
  \begin{enumerate}
  \item 
    \label{item:closed_closed}
    The set $\{(s,t) \mid s\sim t \} \subset S\times S$ is closed.
  \item 
    \label{item:closed_seqclosed}
    Let $(s_n)_{n\in \N}, (t_n)_{n\in \N}$ be convergent
    sequences in $S$. Then
    \begin{equation*}
      s_n\sim t_n
      \text{ for all } n\in \N,  \text{ implies } \lim s_n\sim \lim t_n. 
    \end{equation*}
  \item 
    \label{item:closed_Hausdorff}
    For any compact subset $C\subseteq S$: If $C$ is the Hausdorff limit 
    of a sequence of equivalence classes $([x_n])_{n\in \N}$,
    then $C\subseteq [x]$ for some $x\in S$. 
  \item 
    \label{item:def_usc}
    For any equivalence class $[x]$ and any neighborhood $U$ of $[x]$ there
    is a neighborhood $V\subset U$ of $[x]$, s.t.
    \begin{equation*}
      [y]\cap V\neq \emptyset \Rightarrow [y] \subset U.
    \end{equation*}
  \item  
    \label{item:closed_saturnbh}
    Each neighborhood $U$ of any equivalence class $[x]$ contains
    a \emph{saturated} neighborhood $V$ of $[x]$. 
    
  \item 
    \label{item:closed_saturint}    
    For each open set $U$ the set saturated interior $U^*$ 
    is open. 
    \setcounter{mylistnum}{\value{enumi}}
  \item 
    \label{item:closed_qtHausdorff}
    The quotient topology on $\Sqt$ is Hausdorff.
  \item
    \label{item:closed_saturationclosed}
    For any closed set $K\subset S$ the saturation
    \begin{equation*}
      K^\dagger:= \bigcup \{[x] \mid x\in K\}
    \end{equation*}
    is closed.
  \item 
    \label{item:closed_proj}
    The quotient map $\Pi\colon S \to \Sqt$ is closed.
  \item 
    \label{item:closed_qtMetrz}
    The quotient topology on $\Sqt$ is metrizable.  
  \end{enumerate}
\end{deflem}  
 
\begin{proof}
A round robin style proof of the equivalence of \itemiref{item:closed_closed} to 
\itemiref{item:closed_qtHausdorff} is: \\
\itemiref{item:closed_closed} $\Rightarrow$ 
\itemiref{item:closed_seqclosed} is a general property of closed sets.\\ 
\itemiref{item:closed_seqclosed} $\Rightarrow$ \itemiref{item:closed_Hausdorff}, 
because any two points $z,w\in C$ are limits of sequences $(z_n), (w_n)$ with 
$z_n, w_n\in [x_n]$ for all $n$.\\ 
\itemiref{item:closed_Hausdorff} $\Rightarrow$ \itemiref{item:def_usc}, 
by contra position, if not then there exists an equivalence class $[x]$,  
a neighborhood $U$ of $[x]$, which we can suppose to be open, a sequence 
of equivalence classes $[x_n]$ and points $z_n, w_n \in [x_n]$ such that 
$z_n$ converges to $z\in [x]$ and $S\Sm U \ni w_n$. 
Passing to a subsequence if necessary we can suppose $[x_n]$ 
converges to a compact set $C$, which by construction intersects both $[x]$ 
and the compact complement $S\Sm U$ and thus is not a subset of any equivalence class.\\
\itemiref{item:def_usc} $\Rightarrow$ \itemiref{item:closed_saturnbh}, 
let $[x]$, $U$ and $V$ be as in \itemiref{item:def_usc}, then the saturation $V^\dagger$ 
fulfills the requirements.\\ 
\itemiref{item:closed_saturnbh} $\Rightarrow$ \itemiref{item:closed_saturint} 
follows immediately.\\ 
\itemiref{item:closed_saturint} $\Rightarrow$ \itemiref{item:closed_qtHausdorff}. 
Let $[x]$ and $[y]$ be distinct and hence disjoint equivalence classes. 
Let $U([x])$ and $U([y])$ be corresponding disjoint open neighborhoods in $S$. 
Then $\Pi(U^*([x]))$ and $\Pi(U^*([y]))$ are disjoint neighborhoods of 
$\Pi(x)$ and $\Pi(y)$.\\
\itemiref{item:closed_qtHausdorff} $\Rightarrow$ \itemiref{item:closed_closed}
Let {\mapfromto {\Phi = \Pi\times\Pi} {S\times S}{\Sqt\times \Sqt}} 
i.e.,~$\Phi(x,y) = (\Pi(x),\Pi(y))$,
then $R=\Phi^{-1}(\Delta_{S/\sim})$ is closed, because $\Phi$ is continuous 
and $\Sqt$ is Hausdorff.\\ 
\itemiref{item:closed_saturint} $\Leftrightarrow$ \itemiref{item:closed_saturationclosed} 
By definition, if $U = S\Sm K$ then $U^* = S\Sm K^\dagger$.\\
\itemiref{item:closed_saturationclosed} $\Leftrightarrow$ \itemiref{item:closed_proj} 
by definition of the quotient topology, as $\Pi^{-1}(\Pi(K)) = K^\dagger$.\\
Finally trivially \itemiref{item:closed_qtMetrz} $\Rightarrow$ \itemiref{item:closed_qtHausdorff} 
whereas\\ \itemiref{item:closed_qtHausdorff} and its equivalent \itemiref{item:closed_saturint} 
$\Rightarrow$ \itemiref{item:closed_qtMetrz}, because a compact Hausdorff space is 
metrizable if and only if it is second countable, so that we just need to exhibit a countable 
basis for the quotient topology on $\Sqt$. For $\eps>0$ and $x\in S$ write 
$U([x],\eps) = \{x'\mid d(x',[x]) < \eps\}$. 
Then for each fixed $m\in\N$ the open saturated sets 
$U^*([x],1/m)\subseteq U([x],1/m)$ form a covering of the compact space $S$. 
Hence we may extract a finite sub-covering 
$V_{m,1}= U^*([x_{m,1}],m),\ldots, V_{m,n_m} =U^*([x_{m,n_m}],m)$. 
The projected sets $\Pi(V_{m,i})$ form a countable basis for the quotient topology 
on $\Sqt$ : An open neighborhood of a point $\Pi(x)$ 
is the projection of a saturated neighborhood $V$ of $[x]$. 
We shall thus find $m$ and $i$ such that $V_{m,i}\subset V$. 
Let $0< \delta = \inf\{\d(x',[x]) \mid \;x'\in S\Sm V\}$. 
And let $0< \eps = \inf\{\d(x',[x]) \mid \;x'\in S\Sm U^*([x],\delta/2)\}$. 
For $m$ with $1<m\eps$ choose $i$ such that $[x]\subset V_{m,i}$. 
Then there exists $y\in [x_{m,i}]$ with $\d(y,[x])< 1/m<\eps$ and hence 
$[x_{m,i}]\subset U^*([x],\delta/2)\subseteq U([x],\delta/2)$. 
Thus $V_{m,i}\subset U([x],\delta) \subseteq V$, as $1/m<\eps\leq\delta/2$.
\end{proof}

Property (\ref{item:closed_Hausdorff}) shows that in the case when
$\sim$ is closed ``small equivalence classes can converge to bigger
equivalence classes'' (but not vice versa!). This is the reason that
closed equivalence relations are also called \emph{upper
  semi-continuous}. The standard definition for upper semi-continuity
(in general topological spaces) is (\ref{item:def_usc}). 

\smallskip
Let $\varphi\colon S\to S'$ be a map. Then $\varphi$ \emph{induces} an
equivalence relation $\sim~=~\sim_\varphi$ on $S$: 
\begin{equation*}
  s\sim t \quad \text{if and only if} \quad \varphi(s)=\varphi(t),
\end{equation*}
for all $s,t\in S$. That is $\sim~= (\varphi\times\varphi)^{-1}(\Delta_{S'})$. 
Evidently the induced map {\mapfromto \psi {\Sqt} {\varphi(S)}} given by 
$\psi([x]) = \varphi(x)$ is a bijection. 
Moreover if $S, S'$ are topological spaces and $\varphi$ is continuous, 
then $\psi$ is continuous.

The following is a standard topological fact, for which the reader
shall easily provide a proof.
\begin{lemma}
  \label{lem:sim_induced}
  Let $\varphi\colon S\to S'$ be a continuous surjective map between
  topological spaces and $\sim$ be the equivalence relation on $S$
  induced by $\varphi$.  
  Then the induced map {\mapfromto \psi {\Sqt}{S'}} is a homeomorphism, 
  if and only if $\varphi(U)$ is open for every saturated open set.
  In particular $\psi$ is a homeomorphism if $S$ is compact and $S'$
  is Hausdorff. In this case $\sim$ is furthermore closed.  
\end{lemma}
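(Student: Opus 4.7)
The plan is to split the statement into three independent pieces. The paragraph immediately before the lemma already establishes that $\psi\colon \Sqt \to S'$ given by $\psi([x]) = \varphi(x)$ is a continuous bijection (using surjectivity of $\varphi$). So the only content of the lemma is (a) the characterization of when $\psi^{-1}$ is continuous, (b) the sufficient condition ``$S$ compact, $S'$ Hausdorff'', and (c) that under (b) the relation $\sim$ is closed.

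For part (a) I would unwind the definition of the quotient topology. The open sets of $\Sqt$ are exactly the sets $\Pi(U)$ with $U\subset S$ saturated and open, and for such $U$ one has
\[
\psi(\Pi(U)) = \varphi(\Pi^{-1}(\Pi(U))) = \varphi(U),
\]
since by definition $\Pi^{-1}(\Pi(U)) = U$ for saturated $U$. Hence $\psi$ is an open map, equivalently (given that it is already a continuous bijection) a homeomorphism, if and only if $\varphi(U)$ is open in $S'$ for every saturated open $U\subset S$, which is the stated criterion.

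For part (b), the space $\Sqt$ is compact as the continuous image of $S$ under $\Pi$. A continuous bijection from a compact space onto a Hausdorff space is automatically a homeomorphism, since it sends closed, hence compact, subsets to compact, hence closed, subsets. Thus $\psi$ is a homeomorphism with no further work.

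For part (c), I would invoke \defref{def:closed_eq}. Each equivalence class $[x] = \varphi^{-1}(\{\varphi(x)\})$ is the preimage under the continuous map $\varphi$ of a point in the Hausdorff space $S'$, hence closed in the compact space $S$, hence compact. Moreover
\[
\{(s,t)\in S\times S \mid s\sim t\} = (\varphi\times\varphi)^{-1}(\Delta_{S'}),
\]
which is closed because $\Delta_{S'}$ is closed (by Hausdorffness of $S'$) and $\varphi\times\varphi$ is continuous. This verifies condition \itemiref{item:closed_closed} of \defref{def:closed_eq}, and $\sim$ is closed. No step is genuinely delicate; the only point that must be noticed is the identity $\psi(\Pi(U)) = \varphi(U)$ for saturated $U$, which is what converts the openness criterion in (a) into a condition purely about $\varphi$.
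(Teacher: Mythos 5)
The paper does not give a proof of this lemma; it is explicitly left to the reader (``a standard topological fact, for which the reader shall easily provide a proof''). Your argument is correct and is the standard one: the quotient topology on \(\Sqt\) has open sets exactly \(\Pi(U)\) for \(U\) saturated open, the identity \(\psi(\Pi(U)) = \varphi(U)\) converts openness of \(\psi\) into the stated condition on \(\varphi\), the compact-to-Hausdorff case follows from the usual closed-map argument, and closedness of \(\sim\) follows from \(\sim\;=(\varphi\times\varphi)^{-1}(\Delta_{S'})\) together with closedness of \(\Delta_{S'}\), which is condition \itemiref{item:closed_closed} of \defref{def:closed_eq}. Nothing is missing.
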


\begin{lemma}[Closure of equivalence relation]
  \label{lem:usc_closure}
  Let $\sim$ be an equivalence relation on a compact metric space
  $S$. Then there is a unique smallest closed equivalence relation $\simhat$
  bigger than $\sim$. We call $\simhat$ the \defn{closure} 
  of $\sim$. 
\end{lemma}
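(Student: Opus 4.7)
The plan is to construct $\simhat$ as the meet in the lattice of equivalence relations of the family of all closed equivalence relations bigger than $\sim$, and then to verify that this meet is itself closed. The family is non-empty: the trivial relation $S\times S$ is an equivalence relation, and as a subset of $S\times S$ it is closed, with a single equivalence class $S$ which is compact. Hence it is a closed equivalence relation containing $\sim$, and the family
\begin{equation*}
  \mathcal{F} := \{ R \subseteq S\times S \mid R \text{ is a closed equivalence relation with } \sim\, \subseteq R\}
\end{equation*}
is non-empty. Define $\simhat := \bigwedge \mathcal{F} = \bigcap \mathcal{F}$.

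First, $\simhat$ is an equivalence relation: as already noted in Section~\ref{sec:latt-equiv-relat}, the meet of any family of equivalence relations is an equivalence relation, and clearly $\sim\, \subseteq \simhat$ since $\sim\, \subseteq R$ for every $R \in \mathcal{F}$. Second, $\simhat$ is closed as a subset of $S\times S$, being an intersection of closed subsets. To confirm it is a closed equivalence relation in the sense of \defref{def:closed_eq}, I need the equivalence classes $[x]_{\simhat}$ to be compact; but for each $x$ the class $[x]_{\simhat} = \{y \in S \mid (x,y) \in \simhat\}$ is the $x$-slice of the closed set $\simhat \subseteq S\times S$, hence closed in the compact metric space $S$, hence compact. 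Thus $\simhat$ satisfies condition \itemiref{item:closed_closed} and compactness of classes, so $\simhat \in \mathcal{F}$.

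Finally, uniqueness and minimality are automatic. By construction $\simhat \subseteq R$ for every $R \in \mathcal{F}$, so $\simhat$ is a smallest element of $\mathcal{F}$; and if $\simhat'$ were another smallest element, then $\simhat \subseteq \simhat'$ and $\simhat' \subseteq \simhat$, giving $\simhat = \simhat'$. There is no real obstacle here: the only step requiring a brief check beyond lattice generalities is verifying that intersection preserves the \emph{closed} property in the sense of \defref{def:closed_eq}, and this reduces to the observation that slices of closed subsets of $S\times S$ are closed and that closed subsets of the compact space $S$ are compact.
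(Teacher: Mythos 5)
Your proof is correct and follows the same approach as the paper: define $\simhat$ as the intersection of all closed equivalence relations containing $\sim$. The paper simply asserts that this intersection is ``clearly'' a closed equivalence relation; you supply the short verification (the family is nonempty because of $S\times S$, the intersection is closed in $S\times S$, slices of closed sets are closed and hence compact in $S$), which is exactly what the word ``clearly'' elides.
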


\begin{proof}
  Let $(R_j)_{j\in J}$ be the family of all closed equivalence
  relations bigger than $\sim$. Then we define $\simhat$ as the meet
  (i.e., the intersection) of all $R_j$. Clearly $\simhat$ is a closed
  equivalence relation, bigger than $\sim$, and the smallest such
  relation. Uniqueness is evident as well. 
\end{proof}

Note that $\{(s,t)\mid s\;\simhat\; t\}$ is generally \emph{not} the
closure of 
$\{(s,t)\mid s\sim t\}$, which may fail to be transitive. An explicit
description of $\simhat$ is given in the proof of
Proposition~\ref{prop:clos_dynamics}. 

\subsection{Equivalence relations and (semi-)conjugacies}
\label{sec:equiv-relat-dynam}

Let $S_1$ and $S_2$ be spaces equipped with equivalence relations $\sim_1$ and $\sim_2$ 
respectively. 
A mapping {\mapfromto f {S_1} {S_2}} is called a \emph{semi-conjugacy 
for the equivalence relations} $\sim_1$ and $\sim_2$ if $x\sim_1 y\Rightarrow
f(x)\sim_2 f(y)$ for all $x,y\in S_1$. Equivalently $f^{-1}(U)$ is
$\sim_1$-saturated for any $\sim_2$-saturated set $U\subset S_2$. 
The map $f$ descends to (or induces) a map {\mapfromto F \Sqte \Sqtt} 
between the quotients $\Sqte$ and $\Sqtt$ given by 
\begin{equation*}
  F ([x]_1) := [f(x)]_2,
\end{equation*}
for all $x\in S_1$, if and only if $f$ semi-conjugates $\sim_1$ to $\sim_2$.

If $S_1=S_2=S$ and $\sim_1\;=\;\sim_2\;=\;\sim$ 
we also say that $\sim$ is \defn{$f$-invariant} 
and write $f/\?\sim$ for the quotient map $F$ above.

\begin{lemma}
  \label{lem:sim_dyn}
  Suppose {\mapfromto f {S_1} {S_2}} is continuous and that $f$ semi-conjugates 
  $\sim_1$ to $\sim_2$. Then {\mapfromto {F:=f/\?\sim\;}{\Sqte}{\Sqtt}} is continuous. 
\end{lemma}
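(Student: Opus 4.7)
The plan is to invoke the universal property of the quotient topology. The map $F$ is well-defined precisely because $f$ is a semi-conjugacy (as noted in the paragraph preceding the lemma), so the only content to verify is continuity.

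First, I would observe the commutative relation $F\circ \Pi_1 = \Pi_2\circ f$, which is immediate from the definition $F([x]_1) := [f(x)]_2$ and $\Pi_i(x) = [x]_i$. Next, let $U\subset \Sqtt$ be an arbitrary open set. By the definition of the quotient topology on $\Sqte$, the set $F^{-1}(U)$ is open in $\Sqte$ if and only if $\Pi_1^{-1}(F^{-1}(U))$ is open in $S_1$.

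Using the commutativity observed above, I would rewrite
\begin{equation*}
\Pi_1^{-1}(F^{-1}(U)) = (F\circ\Pi_1)^{-1}(U) = (\Pi_2\circ f)^{-1}(U) = f^{-1}\bigl(\Pi_2^{-1}(U)\bigr).
\end{equation*}
Since $U$ is open in the quotient topology on $\Sqtt$, the preimage $\Pi_2^{-1}(U)$ is open in $S_2$ by definition, and since $f$ is continuous, its preimage under $f$ is open in $S_1$. Hence $\Pi_1^{-1}(F^{-1}(U))$ is open, so $F^{-1}(U)$ is open, establishing continuity of $F$.

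There is no real obstacle here; the argument is a routine diagram chase using only the definition of the quotient topology and the continuity of $f$, together with the commutative square linking $f$, $F$, $\Pi_1$ and $\Pi_2$. No hypothesis about closedness, Hausdorffness, or compactness of $S_1$ or $S_2$ is needed for this implication.
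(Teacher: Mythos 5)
Your proof is correct and is essentially the same diagram chase as the paper's: both reduce openness of $F^{-1}(U)$ to openness of $f^{-1}(\Pi_2^{-1}(U))$ via the quotient topology. The only cosmetic difference is that you use the commuting square $F\circ\Pi_1=\Pi_2\circ f$ directly, whereas the paper phrases the same step by observing that $f^{-1}(\Pi_2^{-1}(U))$ is $\sim_1$-saturated and open; these are the same argument.
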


\begin{proof}
  Let $[U']\subset \Sqtt$ be open. Then $U:=\Pi_2^{-1}([U'])\subset S_2$
  is open and $\sim_2$ saturated. 
  Thus $f^{-1}(U)\subset S_1$ is open and $\sim_1$ saturated so that
  $\Pi_1(f^{-1}(U))= F^{-1}([U'])\subset \Sqte$ is open.  
\end{proof}

\begin{lemma}
  \label{lem:sim_rel_finv}
  Let $R$ be a relation on a set $S$ that is invariant with respect to
  $f\colon S\to S$, i.e., $xRy \Rightarrow f(x)Rf(y)$ for all $x,y\in
  S$. Then the equivalence relation $\sim$ generated by $R$ is
  $f$-invariant. 
\end{lemma}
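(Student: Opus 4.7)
The plan is to describe the equivalence relation $\sim$ generated by $R$ explicitly in terms of finite chains, and then verify that $f$-invariance is preserved at each step of the construction.

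First I would recall (or prove in one line) the standard explicit description of the equivalence relation generated by an arbitrary relation $R\subset S\times S$: set $R'=R\cup R^{-1}\cup \Delta_S$, the reflexive symmetric hull of $R$, and then $\sim$ is the transitive closure of $R'$, i.e., $x\sim y$ if and only if there is a finite chain
\[
x=z_0,\,z_1,\,\ldots,\,z_n=y \quad\text{with}\quad z_iR'z_{i+1} \text{ for all } 0\le i<n.
\]
That this is the smallest equivalence relation containing $R$ is immediate, since any equivalence relation containing $R$ must contain $R^{-1}$, $\Delta_S$, and be closed under transitivity.

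Next I would verify that $R'$ is $f$-invariant. The assumption $xRy\Rightarrow f(x)Rf(y)$ gives the $R$ part directly. If $xR^{-1}y$, then $yRx$, hence $f(y)Rf(x)$, i.e., $f(x)R^{-1}f(y)$. If $x\Delta_S y$, then $x=y$, so $f(x)=f(y)$, i.e., $f(x)\Delta_S f(y)$. Therefore $f$-invariance passes to $R'=R\cup R^{-1}\cup \Delta_S$.

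Finally I would lift $f$-invariance to the transitive closure: given a chain $x=z_0,z_1,\ldots,z_n=y$ witnessing $x\sim y$ as above, applying $f$ termwise gives a chain $f(x)=f(z_0),f(z_1),\ldots,f(z_n)=f(y)$ with $f(z_i)R'f(z_{i+1})$ for each $i$ by the invariance of $R'$. Hence $f(x)\sim f(y)$, completing the proof. The argument is essentially bookkeeping; the only conceptual point that needs attention is reducing the generation procedure to the three elementary operations (symmetrization, reflexivization, transitive closure) and checking $f$-invariance is preserved under each, which is the single mild obstacle.
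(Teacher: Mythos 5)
Your proof is correct and complete. The paper leaves this lemma as an exercise, so there is no proof to compare against; your argument, via the explicit description of $\sim$ as the transitive closure of the reflexive symmetric hull $R'=R\cup R^{-1}\cup\Delta_S$ and the termwise application of $f$ to chains, is the standard route and handles all three closure operations cleanly.
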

The proof is left as an easy exercise. 

\smallskip
Let $f\colon S\to S$, $g\colon S'\to S'$ be self-maps of the sets
$S,S'$ respectively, which we consider as dynamical systems. A
\emph{semi-conjugacy} from $f$ to $g$ is a surjection $\varphi \colon S\to S'$
such that $\varphi\circ f= g\circ \varphi$ on $S$, i.e., the following
diagram commutes

\begin{equation*}
  \xymatrix{
    S \ar[d]_{\varphi} \ar[r]^f
    &
    S \ar[d]^{\varphi} 
    \\
    S' \ar[r]^g
    & S'.
    }
\end{equation*}
We say $f$ is semi-conjugate to $g$ by $\varphi$. 
The map $g\colon S'\to S'$ is called a \emph{factor} 
of the dynamical system $f\colon S\to S$. 
If $S,S'$ are topological spaces and $\varphi$ is continuous,
then $\varphi$ is called a \emph{topological semi-conjugacy}.
If $\varphi$ is a
homeomorphism $\varphi$ is called a \emph{topological
  conjugacy}. Similarly if $\varphi$ is (quasi-) conformal, we call
$\varphi$ a (quasi-) conformal conjugacy. 

\begin{lemma}
  \label{lem:semi_sim}
   Let $f$ be semi-conjugate to $g$ by $\varphi$ as above. Then the
   following holds.
  \begin{itemize}
  \item  
    The equivalence relation $\sim$ induced by $\varphi$ is $f$-invariant.
  \item
    Assume $\varphi$ is a topological semi-conjugacy, $S$ compact, and
    $S'$ is Hausdorff.  
    Then 
    $f/\?\sim\; \colon \Sqt\, \to \Sqt$ is continuous and
    topologically conjugate to $g\colon S'\to S'$. 
  \end{itemize}
\end{lemma}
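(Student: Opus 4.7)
The plan is to verify each of the two bullet points directly from the definitions, leveraging Lemma~\ref{lem:sim_induced} and Lemma~\ref{lem:sim_dyn} to handle the topological parts.

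For the first bullet, I would just unwind the definition of $\sim$. Suppose $x \sim y$, which by definition means $\varphi(x) = \varphi(y)$. Using the semi-conjugacy relation $\varphi \circ f = g \circ \varphi$, compute
\[
\varphi(f(x)) = g(\varphi(x)) = g(\varphi(y)) = \varphi(f(y)),
\]
so $f(x) \sim f(y)$. This shows $\sim$ is $f$-invariant, so $f/\!\sim$ is well defined as a map $S/\!\sim\; \to\; S/\!\sim$.

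For the second bullet, the key input is that $\varphi \colon S \to S'$ is a continuous surjection with $S$ compact and $S'$ Hausdorff. By Lemma~\ref{lem:sim_induced}, the induced map $\psi \colon S/\!\sim\; \to S'$ defined by $\psi([x]) = \varphi(x)$ is a homeomorphism, and $\sim$ is closed. Since $\sim$ is $f$-invariant and $f$ is continuous, Lemma~\ref{lem:sim_dyn} (applied with $\sim_1 = \sim_2 = \sim$) yields that $f/\!\sim\; \colon S/\!\sim\; \to\; S/\!\sim$ is continuous.

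It then remains to verify that $\psi$ conjugates $f/\!\sim$ to $g$, which is again a diagram chase: for any $x \in S$,
\[
\psi\bigl((f/\!\sim)([x])\bigr) = \psi([f(x)]) = \varphi(f(x)) = g(\varphi(x)) = g(\psi([x])),
\]
so $\psi \circ (f/\!\sim) = g \circ \psi$. Since $\psi$ is already a homeomorphism, this makes it a topological conjugacy from $f/\!\sim$ to $g$. There is no real obstacle in this proof; essentially everything is a bookkeeping consequence of the two previously stated lemmas, with the only mild point being to remember to invoke Lemma~\ref{lem:sim_induced} to obtain both the homeomorphism $\psi$ and the fact that $\sim$ is closed (needed in turn so that Lemma~\ref{lem:sim_dyn} applies cleanly to give continuity of $f/\!\sim$).
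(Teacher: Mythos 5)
Your proof is correct and follows essentially the same route as the paper: for the first bullet, unwind the definition of $\sim$ and the semi-conjugacy identity; for the second, invoke Lemma~\ref{lem:sim_induced} to get the homeomorphism $\psi$, invoke Lemma~\ref{lem:sim_dyn} for continuity of the quotient map, and check the conjugacy diagram. One small inaccuracy worth flagging: Lemma~\ref{lem:sim_dyn} does not require $\sim$ to be closed, so closedness is not needed to conclude continuity of $f/\!\sim$; the hypotheses of Lemma~\ref{lem:sim_dyn} (continuity of $f$ and $f$-invariance of $\sim$) are already enough.
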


\begin{proof}
  The first statement follows immediately from the
  definitions. Indeed, let $x\sim y$ for some $x,y\in S$. Then
  $\varphi(x) =\varphi(y)$. Thus $\varphi\circ f(x)=g\circ
  \varphi(x)=g\circ \varphi(y)= \varphi\circ f(y)$, which means that
  $f(x)\sim f(y)$.  

  To see the second statement, note that $S/\!\sim$ is homeomorphic to
  $S'$ by Lemma~\ref{lem:sim_induced}, $f/\!\sim$ is continuous by
  Lemma~\ref{lem:sim_dyn}. It is easily verified that the
  homeomorphism $\psi\colon S/\!\sim \to S'$ satisfies $g\circ \psi =
  \psi \circ f/\!\sim$. 
\end{proof}

\begin{proposition}
  \label{prop:clos_dynamics}
  Let $S$ be a compact metric space, $f\colon S\to S$ be a continuous
  map, and $\sim$ be an $f$-invariant equivalence relation. Then the
  closure $\simhat$ of $\sim$ is $f$-invariant. 
\end{proposition}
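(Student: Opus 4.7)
The plan is to define an auxiliary relation and use the minimality characterization of $\simhat$ from Lemma~\ref{lem:usc_closure}. Specifically, I would define
\[
  R := (f\times f)^{-1}(\simhat) = \{(x,y)\in S\times S \mid f(x)\simhat f(y)\}
\]
and prove that $R$ is a closed equivalence relation on $S$ containing $\sim$. Then by the minimality clause of Lemma~\ref{lem:usc_closure}, $\simhat \subseteq R$, which unwinds exactly to the $f$-invariance statement: $x\simhat y \Rightarrow f(x)\simhat f(y)$.

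The verifications are all short. Reflexivity, symmetry, and transitivity of $R$ are inherited directly from the corresponding properties of $\simhat$. To see that $\sim\; \subseteq R$: if $x\sim y$ then $f(x)\sim f(y)$ by $f$-invariance of $\sim$, and $\sim\; \subseteq\;\simhat$, so $f(x)\simhat f(y)$, i.e.\ $x R y$. Closedness of $R$ uses Lemma~\ref{def:closed_eq}\itemiref{item:closed_closed}: since $\simhat$ is closed its graph is closed in $S\times S$, and $f\times f$ is continuous, so $R$, being the preimage, is closed in $S\times S$. Finally the equivalence class $[x]_R = f^{-1}([f(x)]_{\simhat})$ is the continuous preimage of the compact (closed in $S$) class $[f(x)]_{\simhat}$, hence closed in $S$ and therefore compact. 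Thus $R$ meets all the requirements in the definition of a closed equivalence relation in Lemma~\ref{def:closed_eq}.

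There is no real obstacle here; the only subtle point is recognizing that one should not try to argue directly by approximating $\simhat$ from $\sim$ (the graph of $\simhat$ need not be the topological closure of the graph of $\sim$, as noted in the remark preceding the proposition). Taking the preimage under $f\times f$ bypasses this difficulty entirely, reducing the problem to stability of the class of closed equivalence relations under pulling back by a continuous self-map, combined with the universal property defining $\simhat$.
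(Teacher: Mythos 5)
Your proof is correct, and it takes a genuinely different and substantially shorter route than the paper's. The paper proceeds by transfinite induction: starting from $\sim$, it alternately forms the topological closure in $S\times S$ and the equivalence relation generated, producing an increasing ordinal-indexed family $(\sim_\alpha)$ of $f$-invariant equivalence relations that must stabilize (by a cardinality bound) at $\simhat$. Your argument instead pulls $\simhat$ back through $f\times f$ to obtain $R=(f\times f)^{-1}(\simhat)$, checks that $R$ is a closed equivalence relation containing $\sim$ (graph closedness via continuity of $f\times f$, compactness of classes via $[x]_R = f^{-1}\bigl([f(x)]_{\simhat}\bigr)$), and then invokes the minimality property from Lemma~\ref{lem:usc_closure} to conclude $\simhat\subseteq R$, which is exactly $f$-invariance. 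What your approach buys is brevity and the complete avoidance of set-theoretic machinery; the universal property of $\simhat$ does all the work. What the paper's longer approach buys is a by-product: an explicit transfinite-iterative description of $\simhat$, which the remark preceding the proposition advertises as useful in its own right. For the specific statement being proved, your argument is cleaner and I see no gap in it.
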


\begin{proof}
  Let $f\colon S\to S$ be continuous.

  We first give an explicit description of the closure $\simhat$. We
  will use ordinal numbers, i.e., transfinite induction.

  Let $\sim$ be any equivalence relation on $S$. Here we think of
  $\sim$ as a subset of $S\times S$. Let $\eqsim$ be
  the closure of $\sim$ in $S\times S$. Note that $\eqsim$ is not
  necessarily an equivalence relation. 

  Now we define $\sim'$ as the equivalence relation generated by
  $\eqsim$. Note that this equivalence relation is not necessarily
  closed. 

  \begin{claim1*}
    If $\sim$ is $f$-invariant then $\sim'$ is $f$-invariant as well.
  \end{claim1*}

  \begin{proof}[Proof of Claim~1]
    We first note that $\eqsim$ is given by the following. For all
    $x,y\in S$ it holds $x \eqsim y$ if and only if there are sequences
    $x_n\to x$, $y_n\to y$ in $S$, such that $x_n \sim y_n$ for all
    $n\in N$. Since $\eqsim$ is closed in $S\times S$, as well as bigger
    than $\sim$ (which is $f$-invariant), it follows that
    \begin{equation*}
      f(x) = \lim f(x_n) \eqsim \lim f(y_n) = f(y).
    \end{equation*}
    Thus $\eqsim$ is $f$-invariant. 
    
    \smallskip
    Note that $\sim'$ is given by the following. For all $x,y\in S$ it
    holds $x\sim' y$ if and only if there is a finite sequence $x_0=x,
    x_1, \dots, x_n=y$ such that
    \begin{equation*}
      x_0 \eqsim x_1 \eqsim \dots x_{n-1}\eqsim x_n.
    \end{equation*}
    Since $\eqsim$ is $f$-invariant it follows that $\sim'$ is
    $f$-invariant. This finishes the proof of Claim~1. 
  \end{proof}
  
  We now define $\sim_\alpha$ for any ordinal number $\alpha$ as
  follows. Let $\sim_0\,:= \,\sim$ be the equivalence relation from the
  statement of the proposition. Assume $\sim_\alpha$ has been defined
  for all ordinals $\beta< \alpha$. If $\alpha$ is not a limit
  ordinal, i.e., $\alpha= \beta+1$ (for an ordinal $\beta$), then
  \begin{equation*}
    \sim_\alpha\,:= \,{\sim_\beta}'.
  \end{equation*}
  If $\alpha$ is a limit ordinal we define
  \begin{equation*}
    \sim_\alpha\,:= \bigvee_{\beta< \alpha} \sim_\beta,
  \end{equation*}
  i.e., the join of all $\sim_\beta$ with $\beta<\alpha$. Recall that
  this is the smallest equivalence relation bigger than all
  $\sim_\beta$ (for $\beta< \alpha$). 
  Note that
  $(\sim_\alpha)$ is an increasing sequence of equivalence relations,
  i.e., $\beta\leq \alpha \Rightarrow \;\sim_\beta\, \leq\,
  \sim_\alpha$. Thus in the case that $\alpha$ is a limit ordinal it
  holds for all $x,y\in S$ that
  \begin{align}
    \label{eq:def_sim_limit_ord}
    &x\sim_\alpha y \quad\text{ if and only if}
    \\ \notag
    &\text{there is a } \beta< \alpha \text{ such that } x\sim_\beta y. 
  \end{align}
  Thus $\sim_\alpha$ has been defined for all ordinals $\alpha$ (by
  transfinite induction). 

  \smallskip
  We now show that all equivalence relations $\sim_\alpha$ are
  $f$-invariant. This holds for $\sim_0$ by assumption. Assume
  $\sim_\beta$ is $f$-invariant for all $\beta< \alpha$. If $\alpha=
  \beta+1$, i.e., if $\alpha$ is not a limit ordinal, it follows from
  Claim~1 above that $\sim_\alpha$ is $f$-invariant. Finally if
  $\alpha$ is a limit ordinal, it follows from the description
  \eqref{eq:def_sim_limit_ord} that $\sim_\alpha$ is $f$-invariant. By
  transfinite induction it follows that $\sim_\alpha$ is $f$-invariant
  for all ordinals $\alpha$. 

  \smallskip
  Clearly $\sim_\alpha$ is closed if and only if $\sim_{\alpha+1}\,=\,
  \sim_\alpha$. In this case the closure of $\sim$ is $\simhat\,=\,
  \sim_\alpha$. Furthermore it then follows that $\sim_\alpha\,=\,
  \sim_\gamma$ for all $\gamma\geq \alpha$. Thus the proof is finished
  with the following.

  \begin{claim2*}
    There exists an ordinal $\alpha$ such that $\sim_{\alpha+1}\,=\,
    \sim_\alpha$. 
  \end{claim2*}
  If this would not be true then all equivalence relations
  $\sim_\alpha$ would be distinct. This is impossible, since
  the cardinality of the set of all equivalence relations on $S$ 
  is bounded by the cardinality of the power set of $S\times S$. 

\end{proof}

\begin{remark}
  The previous proof does not use the axiom of choice, since the
  family of equivalence relations $\sim_\alpha$ constructed in the
  proof is well-ordered by construction. 
\end{remark}

\subsection{Pseudo-isotopies and Moore's theorem}
\label{sec:pseudo-isot-moor}

Moore's theorem is of central importance in the theory of
matings. It gives a condition when an equivalence relation on
the $2$-sphere $\mathbb{S}^2$ yields a quotient space that is again
(homeomorphic to) a $2$-sphere. 

\begin{definition}
  \label{def:Moore-type}
  An equivalence relation $\sim$ on $\mathbb{S}^2$ is called of
  \emph{Moore-type} if 
  \begin{enumerate}[\upshape(1)]
  \item 
    \label{item:Mooretype1}
    $\sim$ is \emph{not trivial}, i.e., there are at least two
    distinct equivalence classes;
  \item 
    \label{item:Mooretype2}
    $\sim$ is \emph{closed};
  \item 
    \label{item:Mooretype3}
    each equivalence class $[x]$ is \emph{connected};
  \item 
    \label{item:Mooretype4}
    no equivalence class \emph{separates} $\mathbb{S}^2$, i.e.,
    $\mathbb{S}^2\setminus [x]$ is 
    {connected} for each equivalence class $[x]$.  
  \end{enumerate}
\end{definition}


\begin{definition}
  \label{def:pseudo-isotopy}
  A homotopy $H \colon X\times [0,1]\to X$ is called a
  \emph{pseudo-isotopy} if $H\colon X\times[0,1) \to X$ is an
  isotopy (i.e., $H(\cdot, t)$ is a homeomorphism for all $t\in
  [0,1)$).  
  We will always assume that $H(x,0)= x$ for all $x\in X$.

  Given a set $A\subset \mathbb{S}^2$, we call $H$
  a pseudo-isotopy \defn{rel.\ $A$} if $H$ is a homotopy rel.\ $A$,
  i.e., if $H(a,t)=a$ for all $a\in A$, $t\in [0,1]$. We call the map $h:=
  H(\cdot, 1)$ the \defn{end} of the pseudo-isotopy $H$. We
  interchangeably write $H(\cdot, t)= H_t(\cdot)$ to unclutter
  notation.   
\end{definition}

The following is \emph{Moore's Theorem}.  
A weaker version was proved by R.~L.~Moore in \cite{Moore}, see also
\cite{Timorin}.

\begin{theorem}[Moore, 1925]
  \label{thm:Moore}
  Let $\sim$ be an equivalence relation on $\mathbb{S}^2$. Then $\sim$ is of
  Moore-type if and only if $\sim$ can be realized as the end of a
  pseudo-isotopy $H\colon 
  \mathbb{S}^2\times [0,1]\to \mathbb{S}^2$ (i.e., $x\sim y
  \Leftrightarrow H_1(x)= H_1(y)$). 
\end{theorem}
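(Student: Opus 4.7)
For the easy direction ($\Leftarrow$), suppose $\sim$ is the end of a pseudo-isotopy $H$ with $h := H_1$. I would first observe that $h$ is surjective: for each $y \in \mathbb{S}^2$, any accumulation point $x$ of $z_t := H_t^{-1}(y)$ as $t\to 1$ satisfies $h(x)=y$ by joint continuity of $H$, so the induced relation is closed by Lemma~\ref{lem:sim_induced} and the quotient is Hausdorff. For the remaining Moore-type conditions the key observation is the sandwich
\[
h^{-1}(\overline{B(y,\epsilon-\delta)}) \subseteq H_t^{-1}(\overline{B(y,\epsilon)}) \subseteq h^{-1}(\overline{B(y,\epsilon+\delta)}),
\]
valid for $t$ close to $1$ (depending on $\delta$), in which the middle set is a closed topological disk because $H_t$ is a homeomorphism: hence connected with connected complement. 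Taking nested intersections as $\epsilon,\delta \to 0$ transports connectedness and non-separation from these disks and their complements to the fiber $h^{-1}(y)$ and to $\mathbb{S}^2 \setminus h^{-1}(y)$.

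The substantive direction is ($\Rightarrow$), and my plan here has two stages. Stage~A identifies $\mathbb{S}^2/\!\sim$ topologically with $\mathbb{S}^2$. The crucial input is that every equivalence class $[x]$ admits arbitrarily small closed topological disk neighborhoods: by Schoenflies together with upper semi-continuity (Lemma~\ref{def:closed_eq}\itemiref{item:def_usc}), one encloses $[x]$ in open topological disks of arbitrarily small diameter whose Jordan boundaries avoid $[x]$ and every nearby equivalence class, each disk being saturated outside a thin collar. From these I extract a nested sequence of finite, saturated disk decompositions $\mathcal{D}_n$ of $\mathbb{S}^2$ with $\operatorname{mesh}(\mathcal{D}_n) \to 0$, such that the intersection along any decreasing chain of nested disks is a single equivalence class. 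Inductively collapsing one level of disks at a time, each collapse realized by a Schoenflies homeomorphism of $\mathbb{S}^2$, produces a sequence of continuous surjections whose uniform limit factors the quotient map through a homeomorphism $\mathbb{S}^2/\!\sim\,\cong \mathbb{S}^2$.

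Stage~B upgrades the quotient map $\Pi\colon \mathbb{S}^2 \to \mathbb{S}^2/\!\sim\,\cong \mathbb{S}^2$ to the end of a pseudo-isotopy. On the time interval $[1-2^{-n},\,1-2^{-(n+1)}]$, inside every disk $D \in \mathcal{D}_n$, I would pinch continuously toward the refinement $\mathcal{D}_{n+1}|_D$ via Alexander's trick relative to $\partial D$: since $\partial D$ is fixed pointwise, the local pieces patch to a self-homeomorphism of $\mathbb{S}^2$ for every $t < 1$, and concatenating over $n$ yields a homotopy $H$. The main obstacle is to coordinate these infinitely many pinchings into a globally continuous homotopy whose uniform limit as $t\to 1$ equals $\Pi$. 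The required quantitative estimate is that the pointwise displacement of each Alexander trick inside $D$ is bounded by $\operatorname{diam}(D)$, which, combined with $\operatorname{mesh}(\mathcal{D}_n)\to 0$, forces $H_t \to \Pi$ uniformly; the pseudo-isotopy property (that $H_t$ is a homeomorphism for every $t<1$) is then automatic, because no individual disk is fully collapsed at any finite time in the concatenation.
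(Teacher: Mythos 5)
Your proposal addresses a theorem the paper itself does not prove: the text only points to \cite[Lemma~2.4]{unmating} for the easy direction and to \cite[Theorem~25.1 and Theorem~13.4]{MR872468} for the hard one, so there is no in-paper argument to compare against. Judged on its own merits, the easy direction ($\Leftarrow$) is essentially sound as a sketch: surjectivity of $h=H_1$ via accumulation points of $H_t^{-1}(y)$, closedness via Lemma~\ref{lem:sim_induced}, and the sandwich by the disks $H_t^{-1}(\overline{B(y,\epsilon)})$ do indeed transport connectedness and non-separation to the fibres; one should add the trivial observation that surjectivity of $h$ already forces at least two classes, and justify that the decreasing intersection can be taken over a nested subfamily, but these are easily repaired.

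The hard direction, however, contains a genuine error. Stage~A asks for finite \emph{saturated} disk decompositions $\mathcal{D}_n$ with $\operatorname{mesh}(\mathcal{D}_n)\to 0$. This is impossible whenever $\sim$ has a non-degenerate class: if $[x]$ is connected and non-degenerate and every disk of $\mathcal{D}_n$ is saturated, then $[x]$, being connected, lies inside a single closed disk of $\mathcal{D}_n$, whose diameter is therefore at least $\operatorname{diam}([x])>0$, bounding the mesh away from zero. Equivalently, your stipulation that ``the intersection along any decreasing chain of nested disks is a single equivalence class,'' combined with $\operatorname{mesh}(\mathcal{D}_n)\to 0$, forces every class to be a singleton — i.e., the construction works only in the trivial case. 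This is not a gap in exposition; the decompositions you build your pseudo-isotopy on simply do not exist, and Stage~B (the Alexander-trick concatenation) inherits the problem because it is fed these non-existent $\mathcal{D}_n$. The standard proofs avoid this: the Bing shrinking route (the one in Daverman, \cite[Theorem~13.4]{MR872468}) uses decompositions into small disks that are \emph{not} saturated, and the entire difficulty is in producing a homeomorphism, supported in a prescribed saturated open cover, that shrinks every class below $\epsilon$ while controlling how classes cross disk boundaries. Moore's original route instead verifies that $\mathbb{S}^2/\!\sim$ is a Peano continuum satisfying a Zippin/Janiszewski-type characterization of the $2$-sphere, and then realizes the pseudo-isotopy afterwards. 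Either way, the crucial technical content you skipped is exactly the one your ``saturated disks'' assumption was meant to trivialize, so the argument as written does not go through.
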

The ``if-direction'', i.e., the easy direction, can be found in
\cite[Lemma~2.4]{unmating}. 
A proof of the other direction can be found in
\cite[Theorem~25.1 and Theorem~13.4]{MR872468}. From
Lemma~\ref{lem:sim_induced} we immediately recover the original form of
Moore's theorem, i.e., that $\mathbb{S}^2/\?\sim$ is homeomorphic to
$\mathbb{S}^2$.

\section{Polynomials}
\label{sec:polynomials}

\subsection{Background from complex dynamics}
\label{sec:backgr-from-compl}

Let $R(z) = p(z)/q(z) : \Chat \to \Chat$ be a rational map, 
where the polynomials $p$ and $q$ are without common factors.
The degree of $R$, i.e.,~the maximum of the degrees of $p$ and $q$, 
will be assumed to be at least $2$.
We consider the dynamical system given by iteration of $R$, i.e., with orbits:
$$
z_0, z_1, \ldots, z_n = R(z_{n-1}), \ldots.
$$

A point $z\in \Chat$ is called a \emph{fixed point} if $R(z) = z$. We then
call $\lambda:= DR(z)=R'(z)$ the multiplier. The point $z$ is called super-attracting, 
attracting, neutral, and repelling respectively if $\lambda=0,
0<\abs{\lambda}<1, \abs{\lambda}=1, \abs{\lambda}>1$. 

A point $z$ is
periodic if it is a fixed point for some iterate $R^k$ (for some
$k\geq 1$), and (super-) attracting, neutral, and repelling
if $z$ is a (super-) attracting, neutral, and repelling fixed point of
$R^k$ respectively. 

The Fatou set $F_R$ is the open set of points in $\Chat$, 
for which the family of iterates $\{R^n\}_n$ form a normal family 
in the sense of Montel on some neighborhood of the point.
The Julia set $J_R$ is the compact complement. 
Equivalently the Julia set is the closure of the set of repelling periodic points.

\smallskip
For a polynomial 
$$
P(z) = a_dz^d + a_{d-1}z^{d-1} + \dots + a_1z + a_0
$$ 
the point $\infty$ is a super-attracting fixed point. 
Consequently the Julia set is a compact subset of $\C$. 
The set
$$
K_f = \{z\in\C \mid f^n(z) \not\to\infty, \textrm{as } n\to\infty \}
$$
is called the filled-in Julia set for $f$. Its topological boundary is the Julia set, 
$J_f = \partial K_f$.
The set $J_f$ and hence $K_f$ is connected precisely
if no (finite) critical point escapes or iterates to $\infty$.

\subsection{B\"{o}ttcher's theorem}
\label{sec:bottchers-theorem}

The dynamics of polynomials is much better understood than the
dynamics of general rational maps. The main reason is the following theorem. 

\begin{theorem}[B\"{o}ttcher]
  \label{thm:boettcher}
  Let $P(z) = z^d + a_{d-1}z^{d-1} +\dots+ a_1z + a_0$ be a monic
  polynomial. Then
  \begin{itemize}
  \item $P$ is conformally conjugate to $z^d$ in a neighborhood of
    $\infty$.
  \item If the Julia set $J$ (equivalently the filled-in Julia set
    $K$) of $P$ is connected the conjugacy extends conformally to the
    Riemann map
    $\varphi \colon \Chat \setminus \Dbar \to \Chat \setminus K$. This
    means the following diagram commutes
    \begin{equation*}
      \xymatrix{
        \Chat \setminus \Dbar \ar[r]^{z^d}\ar[d]_{\varphi}
        &
        \Chat\setminus\Dbar\ar[d]^{\varphi}
        \\
        \Chat\setminus K \ar[r]^P 
        &
        \Chat\setminus K.
      }
    \end{equation*}
  \end{itemize}
\end{theorem}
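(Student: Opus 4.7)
The plan is to construct the Böttcher coordinate in a neighborhood of $\infty$ by an explicit limit, and then, when $J$ is connected, to extend its inverse by repeatedly pulling back along $P$ until the domain exhausts $\Chat\setminus\Dbar$. It will be convenient to write $\phi$ for the conjugacy in the direction dynamical plane $\to$ model plane, so that $\phi\circ P=\phi^{\,d}$, and set $\varphi:=\phi^{-1}$ for the map named in the theorem.

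\textbf{Local conjugacy.} Since $P$ is monic, $P(z)=z^{d}(1+a_{d-1}/z+\cdots+a_{0}/z^{d})$, so for $R$ sufficiently large $P$ maps $\{|z|\geq R\}$ strictly into itself with $|P(z)|\geq\tfrac12|z|^{d}$. We define
\[
\phi_{n}(z):=\bigl(P^{n}(z)\bigr)^{1/d^{n}},
\]
choosing the branch of the $d^{n}$-th root asymptotic to $z$ at $\infty$; this is unambiguous on the simply connected set $\{|z|\geq R\}$, where $P^{n}$ avoids $0$. The ratio
\[
\phi_{n+1}(z)/\phi_{n}(z)=\bigl(1+O(1/P^{n}(z))\bigr)^{1/d^{n+1}}
\]
tends to $1$ so rapidly (since $|P^{n}(z)|$ grows doubly exponentially while the exponent decays exponentially) that the associated telescoping product converges uniformly on $\{|z|\geq R\}$ to a holomorphic $\phi$ tangent to the identity at $\infty$. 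Passing to the limit in $\phi_{n}\circ P=\phi_{n-1}^{\,d}$ yields $\phi(P(z))=\phi(z)^{d}$, and inverting locally gives $\varphi=\phi^{-1}$ conformal on a neighborhood of $\infty$ in $\Chat\setminus\Dbar$ with $\varphi(w^{d})=P(\varphi(w))$.

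\textbf{Global extension when $J$ is connected.} Connectedness of $J$ is equivalent to every finite critical point of $P$ lying in $K$, so $P\colon\Chat\setminus K\to\Chat\setminus K$ is an unramified degree-$d$ covering and $\Chat\setminus K$ is simply connected. To extend $\varphi$ from $\{|w|>\rho\}$ to $\{|w|>\rho^{1/d}\}$, for each $w$ in the larger annulus we observe that $\varphi(w^{d})$ is already defined and declare $\varphi(w)$ to be the $P$-preimage of $\varphi(w^{d})$ that matches the previous definition on the overlap. Simple connectedness of the target together with the absence of critical values of $P$ inside $\Chat\setminus K$ makes this lift globally single-valued and univalent. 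Iterating (with $\rho^{1/d^{n}}\to 1$) extends $\varphi$ holomorphically to all of $\Chat\setminus\Dbar$, preserving the conjugacy. The image is open, nonempty, and closed in $\Chat\setminus K$: if $|w_{n}|\to 1^{+}$, then the forward $z^{d}$-orbit of $w_{n}$ does not escape, so $P^{k}(\varphi(w_{n}))$ stays in $\Chat\setminus K$ for every $k$, forcing $\varphi(w_{n})$ to accumulate on $J$ rather than on any interior point. Hence $\varphi$ is a conformal bijection $\Chat\setminus\Dbar\to\Chat\setminus K$ fixing $\infty$ with derivative $1$, i.e., the normalized Riemann map.

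The main obstacle is the pullback step: the successive preimage choices must glue into a single-valued, univalent extension. Simple connectedness of $\Chat\setminus K$ and the absence there of critical values of $P$, both direct consequences of $K$ being connected, are exactly what kills the monodromy and allows the iterative lift to go through. Were $K$ disconnected, a critical value in $\Chat\setminus K$ would obstruct the extension, and $\varphi$ would extend only as far as the largest subset of $\Chat\setminus\Dbar$ whose forward $z^{d}$-orbit avoids that obstruction.
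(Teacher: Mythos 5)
The paper does not prove this statement; B\"{o}ttcher's theorem is stated as classical background, with the reader implicitly referred to the monographs \cite{Milnor:ComplDyn} and \cite{CarlesonandGamelin}. So there is no in-paper proof to compare against, and I will instead assess your argument on its own terms.

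Your outline is the standard one and is correct in spirit, but there are a few inaccuracies you should repair. First, a small indexing slip: the functional equation you pass to the limit in should read $\phi_n\circ P=\phi_{n+1}^{\,d}$, not $\phi_n\circ P=\phi_{n-1}^{\,d}$; indeed $\phi_n(P(z))=(P^{n+1}(z))^{1/d^n}=\phi_{n+1}(z)^d$. The limit relation $\phi\circ P=\phi^d$ is of course unaffected.

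Second, and more substantively, the assertion that ``$P\colon\Chat\setminus K\to\Chat\setminus K$ is an unramified degree-$d$ covering'' is false: $\infty\in\Chat\setminus K$ is a critical point of $P$ of local degree $d$, and $\infty$ is also a critical value there. What the extension step actually uses is that there are no \emph{other} critical points or values of $P$ in $\Chat\setminus K$, and that the ramification of $P$ at $\infty$ exactly matches the ramification of $z^d$ at $\infty$ (same fixed point, same local degree), so the lift through this single branch point is forced. Equivalently, delete $\infty$: then $P\colon\C\setminus K\to\C\setminus K$ and $z^d\colon\{\,\rho^{1/d}<\abs{w}<\infty\,\}\to\{\,\rho<\abs{w}<\infty\,\}$ are genuine unramified degree-$d$ covers of annuli inducing the same index-$d$ subgroup of $\pi_1$, and covering space theory supplies a unique lift agreeing with $\varphi$ on the overlap. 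As stated, your appeal to ``simple connectedness of the target and absence of critical values'' does not quite hold.

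Third, the closing surjectivity paragraph is muddled: for any fixed $w_n$ with $\abs{w_n}>1$ the forward $z^d$-orbit of $w_n$ \emph{does} escape to $\infty$, so the claim that it ``does not escape'' is wrong. Fortunately surjectivity requires no such argument. Writing $\Omega_0=\varphi(\{\abs{w}>\rho\})$, the $n$-th extension step produces $\varphi\colon\{\abs{w}>\rho^{1/d^n}\}\to P^{-n}(\Omega_0)$ conformally, and since every point of $\Chat\setminus K$ tends to $\infty$ under iteration and $\Omega_0$ is a neighborhood of $\infty$, one has $\bigcup_n P^{-n}(\Omega_0)=\Chat\setminus K$. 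Thus the union of the extensions is already onto, and the open/closed argument can be dropped entirely.
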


Note that in the above $\varphi(\infty)=\infty$. Furthermore we can and shall 
assume $\varphi$ chosen such that 
$\lim_{z\to \infty}z/\varphi(z) = 1$. 
Note that $G(z):= \log \abs{\varphi^{-1}(z)}$ is the \emph{Green's function} of
the domain $\Chat\setminus K$ (with pole at $\infty$). 


\DEF
The \emph{external ray} $R(\zeta)$ of angle $\zeta \in \Sen =\{\abs{z}=1\}$ is the arc
$$
R(\zeta) := R_P(\zeta) = \varphi(\{r\zeta \mid r>1\})$$
\ENDDEF

\subsection{The Carath\'{e}odory loop}
The following theorem of Carath\'{e}odory is fundamental to the study of the boundary 
of simply connected proper subsets of the plane.
\THM[Carath\'{e}odory]
A univalent map {\mapfromto \phi \D \Chat} has a continuous extension to 
$\Sen=\partial\D$ if and only if $\partial\phi(\D)\subset \Chat$ is
locally connected. 
\ENDTHM
\COR
\label{cor:boettcher_ext}
Let the Julia set of the polynomial $P$ be connected and locally
connected. Then the {\Bottcher} conjugacy $\varphi$ from
Theorem~\ref{thm:boettcher} extends to a continuous map
$$
\overline{\varphi} \colon \Chat \setminus \D \to
\Chat\sm\overset\circ{K}. 
$$
This extension is a semi-conjugacy from $z^d$ to $P$ where defined. 
\ENDCOR

In this case the map $\sigma = \sigma_f:= \varphi|_\Sen \colon \Sen\to \partial
K=J$ is called the \emph{Carath\'{e}odory loop} 
or \emph{Carath\'{e}odory semi-conjugacy} as it semi-conjugates 
$z^d$ to $P$. 
Recall that this means it satisfies the following commutative diagram
\begin{equation}
  \label{eq:Cara_semi}
  \xymatrix{
    \Sen \ar[r]^{z^d} \ar[d]_\sigma & \Sen \ar[d]^\sigma
    \\
    J \ar[r]^P & J,
  }
\end{equation}
i.e., $\sigma(z^d)= P\circ \sigma(z)$ for all $z\in \Sen$. 
It follows from Lemma~\ref{lem:semi_sim} that for the equivalence relation
$\sim~=~\sim_\sigma$ on $\Sen$ induced by $\sigma$ the quotient map 
$z^d/\?\sim\;\colon \Sen/\?\sim\, \to \Sen/\?\sim$ 
is topologically conjugate to $P\colon J\to J$, because $\Sen$ is compact and 
$J\subset \C$ is Hausdorff.

\subsection{The lamination of a polynomial}
\label{sec:lamin-polyn}

Let $P$ be a monic polynomial, $J$ its Julia set, and $K$ its filled
Julia set. We assume that $J$ (equivalently $K$) is connected and
locally connected.  

Denote as above by $\sigma\colon \Sen \to J$ its Carath\'{e}odory semi-conjugacy. 
We have seen in \eqref{eq:Cara_semi} that the equivalence relation
induced by $\sigma$ allows to understand $P\colon J \to J$ as a factor
of $z^d\colon \Sen \to \Sen$. 
There is a closely related construction
that allows to understand $P \colon K \to K$ as a factor of a self-map
of the closed unit disk $\Dbar$, see e.g.~\cite[Theorem 1, page 433]{Douady} 
for a construction of $K$. The success of this approach of studying the topology of,  
and dynamics on the filled Julia sets of polynomials by making pinched disk models of $K$, 
may serve as a motivation for defining and studying matings. 
In fact one way to define matings is to glue two pinched disk models along the circle 
$\Sen$ by the map $z\mapsto \zbar$. 

It will be useful to keep in mind below that the cardinality 
of any equivalence class $[z] \cap\Sen$ is finite. 
In fact the rational points $\exp(i2\pi(\Q/\Z))$ are the (pre)-periodic points in $\Sen$ 
under the map $z \mapsto z^d$. 
The rationals are thus the arguments of the (pre)-periodic external rays for $f$. 
By classical results of Sullivan, Douady and Hubbard, the (pre)-periodic rays land on 
(pre)-periodic points which are either repelling or parabolic. 
And conversely the Douady be-landing Theorem asserts that when $K_f$ is connected, 
every repelling or parabolic (pre)-periodic point is the landing point of at least one 
and at most finitely many (pre)-periodic rays. 
Moreover a theorem of Kiwi, states that the maximum number of non-(pre)-periodic 
rays co-landing on a single non-pre-critical and non pre-periodic point is $d$, \cite{Kiwi}. 
For a more recent and enlarged discussion see also the paper by 
Blokh et al. \cite{Blokhetal}.

\section{Mating definitions}
There are many definitions of mating in use, 
e.g.~topological mating, formal mating, 
intermediate forms such as slow matings as introduced by Milnor 
and explored by Buff and Cheritat (see also the contribution by
Cheritat in this volume) 
and Shishikuras degenerate matings, geometric or conformal mating and 
Douady and Hubbards original definition of mating used by Zakeri-Yampolsky.
We start with the topological mating which is the simplest to formulate and 
which readily exhibits the difficulties related to matings.

\subsection{Definition of the Topological Mating}
\label{sec:defin-topol-mating}

Let $P_\wt$, $P_\bt$, $\wt$ for 'white' and $\bt$ for 'black', 
be two monic polynomials of the same degree $d\geq 2$
with connected and locally connected filled-in Julia sets $K_\wt, K_\bt$. 
We consider the disjoint union $K_\wt \sqcup K_\bt$ and the map
$P_\wt \sqcup P_\bt \colon K_\wt\sqcup K_\bt \to K_\wt \sqcup K_\bt$ 
given by
\begin{equation*}
  P_\wt \sqcup P_\bt|_{K_\wt}= P_\wt,
  \quad P_\wt \sqcup P_\bt|_{K_\bt}= P_\bt.
\end{equation*}

Let $\sigma_j \colon \Sen\to \partial K_j$ be their
Carath\'{e}odory loops (here $j=\wt,\bt$). 
Let $\sim$ be the equivalence relation on $K_\wt\sqcup K_\bt$ generated by
\begin{equation*}
  \sigma_\wt(z)  \sim \sigma_\bt(\bar{z}).
\end{equation*}
for all $z\in \Sen$. 
Note that $\sigma_\wt(z) \in J_\wt\subset K_\wt$,
$\sigma_\bt(\bar{z}) \in J_\bt\subset K_\bt$. 
Furthermore from \eqref{eq:Cara_semi} it follows that
\begin{equation*}
  P_\wt(\sigma(z))= \sigma_\wt(z^d) \sim \sigma_\bt(\bar{z}^d) =
  P_\bt(\sigma(\bar{z})).  
\end{equation*}
Thus $\sim$ is $P_\wt\sqcup P_\bt$ invariant
(see Lemma~\ref{lem:sim_rel_finv}) and hence this map
descends to the quotient, see Section~\ref{sec:equiv-relat-dynam}. 

\begin{definition}[Topological mating]
  \label{def:top_mate}
  Let $\sim$ be as above. Then
  \begin{equation*}
    K_\wt \mate K_\bt := K_\wt \sqcup K_\bt/\sim.
  \end{equation*}
  The \emph{topological mating} of the polynomials $P_\wt, P_\bt$ is
  the map 
  \begin{align*}
    P_\wt\mate P_\bt:= P_\wt\sqcup P_\bt/\?\sim \;
    \colon K_\wt \mate K_\bt \to K_\wt \mate K_\bt.
  \end{align*}
\end{definition}


\subsection{Obstructions and equivalences}
\label{sec:obstr-equiv}

From Definition~\ref{def:top_mate} it looks very surprising that
$P_\wt\mate P_\bt$ 
is ``often'' (topologically conjugate to) a rational map. Rather it
seems that there is no reason to assume that $P_\wt\mate P_\bt$ has
any nice properties at all. We list the different \emph{obstructions}.
\newcounter{temp1}
\setcounter{temp1}{\value{theorem}}
\begin{definition}[Mating obstructions]
  \label{def:obstructions}
  
  \mbox{}
  \begin{itemize}
  \item The equivalence relation $\sim$ may fail to be closed. In this
    case $K_\wt\mate K_\bt$ is not a Hausdorff space (see
    Lemma~\ref{def:closed_eq}). We then say that the mating of $P_\wt,
    P_\bt$ is \emph{Hausdorff-obstructed}.
  \item The space $K_\wt\mate K_\bt$ may fail to be a topological sphere. 
    In this case we call the mating \emph{Moore-obstructed}.
  \item If $P_\wt\mate P_\bt$ is a post-critically finite branched covering, 
    it may fail to be Thurston-equivalent to a rational map, 
    the mating then is \emph{Thurston-obstructed}. 
  \end{itemize}
\end{definition}

If in the last case the Thurston obstruction happens to be a Levy
cycle, we also call the mating \emph{Levy-obstructed}. Note that one
possible obstruction is omitted in the above, since the following
holds. 

\begin{proposition}
  \label{prop:mating_branched_covering}
  Assume the topological mating of $P_\wt, P_\bt$ is not Moore
  obstructed, i.e., $K_\wt\mate K_\bt$ is topologically $\Sto$. Then
  the mating $P_\wt\mate P_\bt\colon \Sto \to \Sto$ is an
  orientation-preserving branched covering. 
\end{proposition}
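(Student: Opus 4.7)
The plan is to exhibit the topological mating as a topological quotient of the \emph{formal mating} and transfer the branched-covering structure across the quotient map.

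First, construct the sphere $\Sto_f$ as follows: take two copies of $\C$, compactify each by attaching a circle at infinity (each boundary point $e^{i\theta}$ recording the direction of approach to $\infty$), and glue the two circles via $e^{i\theta}_\wt \leftrightarrow e^{-i\theta}_\bt$. Define $F_f := P_\wt \uplus P_\bt \colon \Sto_f \to \Sto_f$ to agree with $P_j$ on hemisphere $j$ and with $e^{i\theta} \mapsto e^{id\theta}$ on the equator; since $d$ commutes with complex conjugation, the hemispherical definitions match on the equator. A direct local analysis (using $P_j(z) = z^d + O(z^{d-1})$ near $\infty$) shows $F_f$ is an orientation-preserving branched covering of degree $d$.

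Next, construct a continuous surjection $\pi \colon \Sto_f \to K_\wt \mate K_\bt$ by letting $\pi$ coincide with the quotient projection $\Pi$ on each $K_j$, and on the closed basin of infinity of hemisphere $j$ collapsing each external ray $R_j(e^{i\theta})$ together with its point $e^{i\theta}_\infty$ to $\Pi(\sigma_j(e^{i\theta}))$. Continuity on each hemisphere follows from Corollary~\ref{cor:boettcher_ext}; the two hemispherical definitions agree on the equator by the defining identification $\sigma_\wt(e^{i\theta}) \sim \sigma_\bt(e^{-i\theta})$. Because $\sigma_j$ semi-conjugates $z \mapsto z^d$ to $P_j$, we have $\pi \circ F_f = F \circ \pi$. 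Observe that every $\pi$-fiber is connected, since the equivalence class $[z] \subset K_\wt \sqcup K_\bt$ is stitched together inside $\Sto_f$ by the combined external rays $R_\wt(e^{i\theta}) \cup \{e^{i\theta}_\infty\} \cup R_\bt(e^{-i\theta})$ joining its constituent Julia-set points.

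By assumption $K_\wt \mate K_\bt \cong \Sto$, so $\pi \colon \Sto_f \to \Sto$ is a continuous surjection between 2-spheres. The descent $F$ is continuous (Lemma~\ref{lem:sim_dyn}); we show $F$ is open and light (fibers totally disconnected), whence by Sto\"{\i}low's theorem $F$ is a branched covering. For openness: given open $V \subset \Sto$, the set $\pi^{-1}(V)$ is $\sim_\pi$-saturated open, so $F_f(\pi^{-1}(V))$ is open (as $F_f$ is an open map); a short check using $\pi \circ F_f = F \circ \pi$ shows $F_f(\pi^{-1}(V))$ is itself $\sim_\pi$-saturated, whence $F(V) = \pi(F_f(\pi^{-1}(V)))$ is open. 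For lightness: $F_f^{-1}(\pi^{-1}(p)) = \pi^{-1}(F^{-1}(p))$ is a disjoint union of (connected) $\pi$-fibers, one for each preimage of $p$ under $F$; since $F_f$ is a degree-$d$ branched covering, the $F_f$-preimage of a connected set has at most $d$ connected components, so $\#F^{-1}(p) \le d$. Orientation preservation is inherited from $F_f$ on the open dense set $\Pi(\inte K_\wt) \cup \Pi(\inte K_\bt)$, where $\pi$ is a homeomorphism and $F$ agrees with the holomorphic maps $P_\wt, P_\bt$.

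The main obstacle is the finiteness of the $F$-fibers, needed to apply Sto\"{\i}low. This rests on the connectedness of $\pi$-fibers --- built into the construction via the combined external rays --- together with the invariance of $\sim_\pi$ under $F_f$, itself a consequence of the semi-conjugacy property of the B\"ottcher/Carath\'eodory loops.
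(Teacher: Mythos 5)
Your route is genuinely different from the paper's. The paper proves the covering property directly: for a ray-equivalence class $[y]_\ray$ not containing critical values, it builds saturated open neighborhoods $U^\ast[y]$ and $U^\ast[x]$ of $[y]_\ray$ and of each preimage class $[x]_\ray$, and shows $F$ maps $U^\ast[x]$ homeomorphically onto $U^\ast[y]$; this gives a covering away from the critical-value classes, hence a branched covering. You instead invoke Sto\"ilow's theorem and verify openness and lightness of the induced map. Both are legitimate, and your collapsing map $\pi\colon\Sto_f\to K_\wt\mate K_\bt$ plays essentially the role of the homeomorphism $h$ in Proposition~\ref{prop:top_vs_formal} combined with the quotient map for $\sim_\ray$; the observation that $\pi$-fibers are connected ray-trees and therefore finite in number over any point (so that the descent map is light) implicitly uses Proposition~\ref{prop:ray-connections}, exactly as the paper does.

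There are, however, two real gaps. First, the assertion that ``a short check using $\pi\circ F_f = F\circ\pi$ shows $F_f(\pi^{-1}(V))$ is itself $\sim_\pi$-saturated'' hides the actual crux. To verify saturation you must show that if $x'\sim_\pi x = F_f(w)$ with $w\in\pi^{-1}(V)$, then $x'$ also lies in $F_f(\pi^{-1}(V))$; since $\pi^{-1}(V)$ is saturated, this reduces to showing that $F_f$ maps the ray-equivalence class $[w]_\ray$ \emph{onto} the ray-equivalence class $[F_f(w)]_\ray$, not merely into it. This is precisely what the paper's argument establishes (by ruling out identifications of rays that would force $[y]_\ray$ to be cyclic, via Proposition~\ref{prop:ray-connections}), and it is not a formal commutativity computation. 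Since $\pi$ is a Moore-type quotient and therefore not an open map, one cannot circumvent this surjectivity by general nonsense; you need it, and you should prove it. Second, your orientation argument breaks down when $\inte K_\wt=\inte K_\bt=\emptyset$, e.g., when both filled Julia sets are dendrites; in that case $\Pi(\inte K_\wt)\cup\Pi(\inte K_\bt)$ is empty, so there is no open set on which $\pi$ is a homeomorphism and on which $F$ agrees with a holomorphic map. The paper avoids this by observing that $f$ is a pseudo-isotopic deformation of the orientation-preserving map $F_f$ (Moore's theorem), which gives orientation preservation independently of the interiors.
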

The proof is postponed to Section~\ref{sec:ray-equivalence}.

The two views on matings is reflected in the following definitions:
\begin{definition}[Matings as rational maps]
  \label{def:matings_mate_rationals}
  Let $P_\wt, P_\bt$ be two degree $d>1$ polynomials for which there exists 
  a homeomorphism {\mapfromto h {K_\wt \mate K_\bt} \Sto}.  
  \begin{itemize}
  \item The polynomials $P_\wt, P_\bt$ are called \emph{combinatorially mateable} 
  if they are post-critically finite and if the branched covering 
  $h\circ P_\wt \mate P_\bt\circ h^{-1}$ is Thurston equivalent to a rational map $R$.
  \item The polynomials $P_\wt, P_\bt$ are called \emph{topologically mateable} 
  if the homeomorphism $h$ can be so chosen that $R=h\circ P_\wt \mate P_\bt\circ h^{-1}$ 
  is a rational map. 
  \item The polynomials $P_\wt, P_\bt$ are called \emph{geometrically mateable} if 
  they are topologically mateable and $h$ can additionally be chosen to be conformal on 
  the interior of $K_\wt \mate K_\bt$. 
  \end{itemize}
Conversely we say that a rational map $R$ is \emph{combinatorially a mating, 
topologically a mating or geometrically a mating} if there exist polynomials 
$P_\wt, P_\bt$ satisfying the corresponding property above with 
$R=h\circ P_\wt \mate P_\bt\circ h^{-1}$. 
\end{definition}

Note that the first two definitions make sense for Thurston maps as
well.
\newcounter{temp2}
\setcounter{temp2}{\value{theorem}}
\setcounter{theorem}{\value{temp1}}
\begin{definition}[\textbf{Cont.}]
  In the notation of \defref{def:matings_mate_rationals}:  
  \begin{itemize}
  \item If $h$ can not be chosen so that $R$ is rational we say that the mating is 
  \emph{topologically obstructed}.
  \item If $h$ can not be chosen so that $R$ is rational and $h$ is conformal on the interior 
  of $K_\wt \mate K_\bt$ we say the mating is \emph{geometrically obstructed}.
  \end{itemize}
\end{definition}
\setcounter{theorem}{\value{temp2}}
Conjecturally any pair of topologically mateable polynomials are geometrically mateable, 
i.e., there are no purely geometrically obstructed pairs. 

It is not known whether there are polynomials $P_\wt, P_\bt$, whose
matings have a Hausdorff obstruction, i.e., which results in an
equivalence relation $\sim$ that is not closed. We can however take
the closure $\simhat$ of $\sim$, and then $P_\wt\mate P_\bt$ descends to the
closure by Proposition~\ref{prop:clos_dynamics}.

\begin{definition}[Closed topological mating]
  \label{def:closed_top_mating}
  Let 
  \begin{equation*}
    K_\wt \widehat{\mate} K_\bt := (K_\wt \sqcup K_\bt)/\simhat.
  \end{equation*}
  The \defn{closed topological mating} is defined as
  \begin{equation*}
    P_\wt \widehat{\mate} P_\bt := (P_\wt \sqcup P_\bt)/\simhat \;\colon
    K_\wt \widehat{\mate} K_\bt \to K_\wt \widehat{\mate} K_\bt.
  \end{equation*}
\end{definition}

However a priori this is not enough to guarantee that the quotient is homeomorphic to $\Sto$.

\subsection{Semi-conjugacies associated to the topological mating}
\label{sec:semi-conj-assoc}

There are several semi-conjugacies naturally associated with matings. 
Assume the rational map $R\colon \Chat \to \Chat$ is topologically 
the mating of the polynomials $P_\wt, P_\bt$. 

We first note that both Julia sets $J_\wt, J_\bt \subset K_\wt \sqcup
K_\bt$ are mapped by the quotient map $K_\wt\sqcup K_\bt \to (K_\wt
\sqcup K_\bt)/\!\sim\; = K_\wt \mate K_\bt$ to the same set. 
In fact the following Lemma is an easy exercise left to the reader:
\begin{lemma}
  \label{lem:mating_semi_conj}
  Let the rational map $R\colon \Chat \to \Chat$ be topologically 
  the mating of the polynomials $P_\wt, P_\bt$. 
  \begin{itemize}
  \item Then the maps {\mapfromto {P_j} {K_j}{K_j}}, $j=\wt,\bt$ 
    are semi-conjugate to $R$ on the image of the quotients $K_j/\!\sim$. More precisely
    there are maps $\varphi_j\colon K_j \hookrightarrow \Chat$
    for $j=\wt, \bt$ (in general neither injective nor surjective) such that 
    \hbox{$\sim_{\varphi_j} =\; \sim|_{K_j}$} and the following
    diagram commutes.
    \begin{equation*}
      \xymatrix{
        K_j \ar[r]^{P_j} \ar@{^{(}->}[d]_{\varphi_j}
        & K_j \ar@{^{(}->}[d]^{\varphi_j}
        \\
        \Chat \ar[r]^R & \Chat
      }
    \end{equation*}
  \item The dynamics of $R$ on its Julia set $J_R$, i.e., $R\colon J_R
    \to J_R$, is a factor of both 
  {\mapfromto {P_j} {J_j} {J_j}}, $j=\wt,\bt$.
  \end{itemize}
\end{lemma}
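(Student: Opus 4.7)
The plan is to construct the maps $\varphi_j$ explicitly from the mating data and then verify the two claims by direct computation with the quotient projection and the conjugating homeomorphism.

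Since $R$ is topologically the mating of $P_\wt$ and $P_\bt$, \defref{def:matings_mate_rationals} yields a homeomorphism $h\colon K_\wt\mate K_\bt \to \Chat$ with $R= h\circ (P_\wt\mate P_\bt)\circ h^{-1}$. Writing $\Pi\colon K_\wt\sqcup K_\bt\to K_\wt\mate K_\bt$ for the quotient projection and $\iota_j\colon K_j\hookrightarrow K_\wt\sqcup K_\bt$ for the natural inclusion, I would set
\begin{equation*}
\varphi_j := h\circ \Pi\circ \iota_j\colon K_j\longrightarrow \Chat,
\end{equation*}
for $j=\wt,\bt$. Each $\varphi_j$ is continuous as a composition of continuous maps.

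For the first bullet, the identity $\sim_{\varphi_j}\,=\,\sim|_{K_j}$ is immediate: since $h$ is bijective, $\varphi_j(x)=\varphi_j(y)$ amounts to $\Pi(x)=\Pi(y)$, and for $x,y\in K_j$ this is precisely $x\sim y$. For the commutative square, the invariance of $\sim$ under $P_\wt\sqcup P_\bt$ makes $\Pi$ intertwine $P_\wt\sqcup P_\bt$ with $P_\wt\mate P_\bt$, and by construction $h$ conjugates the latter to $R$, so
\begin{equation*}
R\circ \varphi_j = h\circ (P_\wt\mate P_\bt)\circ \Pi\circ \iota_j = h\circ \Pi\circ (P_\wt\sqcup P_\bt)\circ \iota_j = \varphi_j\circ P_j.
\end{equation*}
Non-injectivity occurs whenever $\sim|_{K_j}$ is nontrivial, and non-surjectivity because $\varphi_j(K_j)$ exhausts only one ``hemisphere'' of $\Chat$.

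For the second bullet I would take $\psi_j := \varphi_j|_{J_j}\colon J_j\to J_R$ as the candidate factor map; the semi-conjugacy $R\circ\psi_j = \psi_j\circ P_j$ is inherited from the first bullet, so the only real content is proving $\varphi_j(J_j)=J_R$. Setting $U_j := h(\Pi(\inte K_j))$, the sets $U_\wt,U_\bt$ are disjoint open subsets of $\Chat$ whose common complement is $\Sigma := \Chat\setminus(U_\wt\cup U_\bt) = \varphi_\wt(J_\wt)=\varphi_\bt(J_\bt)$. Since $P_j(\inte K_j)\subset \inte K_j$, each $U_j$ is $R$-forward-invariant, and its complement contains the other nonempty open hemisphere (hence more than three points); Montel's theorem therefore gives $U_j\subset F_R$, so $J_R\subset \Sigma$. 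The main obstacle is the reverse inclusion $\Sigma\subset J_R$: every $p\in\Sigma$ lies on the common boundary of $U_\wt$ and $U_\bt$, so any normal limit $g$ of a subsequence of $(R^n)$ on a neighborhood $V\ni p$ must send $V\cap U_\wt$ into $\overline{U_\wt}$ and $V\cap U_\bt$ into $\overline{U_\bt}$; a case analysis on whether $g$ is constant or open yields a contradiction unless $p\in J_R$. Once $\varphi_j(J_j)=J_R$ is secured, $\psi_j$ is the required continuous surjective semi-conjugacy exhibiting $R|_{J_R}$ as a factor of $P_j|_{J_j}$.
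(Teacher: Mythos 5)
The paper leaves this lemma as "an easy exercise," so there is no model argument to compare with; I am judging your proposal on its own terms.

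Your treatment of the first bullet is exactly right. Defining $\varphi_j := h\circ\Pi\circ\iota_j$, noting that injectivity of $h$ reduces $\sim_{\varphi_j}$ to $\sim|_{K_j}$, and chasing the diagram through $\Pi\circ(P_\wt\sqcup P_\bt)=(P_\wt\mate P_\bt)\circ\Pi$ and $R = h\circ(P_\wt\mate P_\bt)\circ h^{-1}$ is the natural (and I believe intended) proof.

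For the second bullet the reduction is also the right one — restrict to $\psi_j := \varphi_j|_{J_j}$, observe that the semi-conjugacy is inherited, and identify the content as $\varphi_j(J_j)=J_R$, together with the observation $\varphi_\wt(J_\wt)=\varphi_\bt(J_\bt)=:\Sigma = \Chat\setminus(U_\wt\cup U_\bt)$ (which is correct, since $\sim$ only identifies Julia-set points). But the two inclusions are shakier than they read.

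\emph{The inclusion $J_R\subset\Sigma$.} The Montel argument is fine once one knows $\Chat\setminus U_j$ has at least three points, but you justify this by saying it ``contains the other nonempty open hemisphere.'' That is false in general: $U_{j'}$ is empty whenever $\inte K_{j'}=\emptyset$, which happens for any post-critically finite polynomial with dendrite Julia set — the central case for matings, not an exotic one. One should instead argue that $\Sigma$ itself is infinite: $\Sigma=\varphi_j(J_j)$ is a continuous image of a connected compact set, so $\Sigma$ is connected, and it has more than one point because $J_j$ is uncountable while (by the discussion in Section~\ref{sec:ray-equivalence}) each ray-equivalence class is at most countable; hence $J_j$ cannot be a single class. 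A connected set with at least two points is infinite. Once $\#\Sigma\geq 3$, complete invariance of $\Sigma$ (which in turn follows from $P_j^{-1}(\inte K_j)=\inte K_j$, not just forward invariance) gives $J_R\subset\Sigma$ directly.

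\emph{The inclusion $\Sigma\subset J_R$.} This is where the real gap is. The assertion that ``every $p\in\Sigma$ lies on the common boundary of $U_\wt$ and $U_\bt$'' fails whenever one $U_j$ is empty, and even when both are nonempty it does not follow from $\Sigma=\Chat\setminus(U_\wt\cup U_\bt)$ alone. More seriously, ``a case analysis on whether $g$ is constant or open yields a contradiction unless $p\in J_R$'' is not an argument; if $g$ is constant, say, the constant lies in $\overline{U_\wt}\cap\overline{U_\bt}\subset\Sigma$ and nothing is immediately wrong. There is a much cleaner route that sidesteps normal limits entirely. Since we already have $J_R\subset\Sigma$, pick any $p\in J_R$ and $z\in J_j$ with $\varphi_j(z)=p$. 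The iterated preimage $\bigcup_{n\geq 0}P_j^{-n}(z)$ is dense in $J_j$ (standard for polynomials with connected Julia set), the semi-conjugacy sends it into $\bigcup_n R^{-n}(p)\subset J_R$ (complete invariance of the Julia set), and $\varphi_j$ is continuous while $J_R$ is closed; hence $\Sigma=\varphi_j(J_j)\subset J_R$. This replaces the sketchy case analysis with a one-line density argument and needs no topological assumptions on $U_\wt$, $U_\bt$ at all.

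So: the skeleton and the first bullet are correct, but the proof of $\varphi_j(J_j)=J_R$ should be repaired both for the cardinality hypothesis in Montel and, more substantively, in the direction $\Sigma\subset J_R$, where what you have is a heuristic rather than a proof.
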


Topological mating is quite flexible, when the polynomials admits 
dynamics preserving topological deformations, e.g. 
when one of the polynomials has a critical point with an infinite orbit 
contained in hyperbolic components. 
In contrast the conformal mating gives very strong ties between the pair of polynomials 
$P_\wt\mate P_\bt$ and the rational map $R$ realizing the geometrical mating.
The geometrical mating thus gives rise to enumerative type questions:
\begin{itemize}
  \item If two polynomials are geometrically mateable. Is the resulting rational map $R$
  then unique up to {\Mobius} conjugacy? If not are there finitely many such $R$? 
  And then how many? If there are infinitely many, are there natural parametrizations?
  \item How many ways can a rational map be obtained as a geometrical mating of two polynomials?
  \item When does the mating depend continuously/measureably on input data?
\end{itemize} 

\subsection{The Formal Mating}
The fine print of the above discussion is that while the topological
mating is quite easily defined, it is often difficult to
visualize. For example when the filled Julia sets $K_\wt, K_\bt$ are
dendrites it is quite counterintuitive that ``often'' $K_\wt \mate
K_\bt$ is a topological sphere.

The formal mating introduced in the following, circumvents some of the
difficulties. We always obtain a branched covering right from the
start. And taking a further quotient yields again the topological mating. 

Denote by $\Cbar := \C \cup \{(\infty,w)|w\in\Sen\}$ the compactification of $\C$ 
obtained by adjoining a circle at infinity. Note that each monic
polynomial $P= z^d + \dots$ extends continuously to $P\colon \Cbar
\to \Cbar$ by $P((\infty, w)) = (\infty, w^d)$. We denote the extended
map again by $P$ for convenience.

\DEF[Formal Mating]
Let $P_\wt \colon \Cbar_\wt \to \Cbar_\wt$, $P_\bt \colon \Cbar_\bt
\to \Cbar_\bt$ be two monic polynomials of the same degree. Here
$\Cbar_i$ are the compactifications as above of the dynamical planes
$\C_i$  
for each polynomial $P_i$, $i=\wt,\bt$.
Define 
$$
\Cbar_\wt\uplus \Cbar_\bt = (\Cbar_\wt\sqcup \Cbar_\bt)/((\infty_\wt,w)\sim(\infty_\bt,\wbar)).
$$
We write $(\infty,w)$ for the equivalence class containing
$(\infty_\wt,w)\sim(\infty_\bt,\wbar)$ 
and equip $\Cbar_\wt\uplus \Cbar_\bt$ with the quotient topology. We
call this set the \emph{formal mating sphere}.
The
set $\Seninfty:= \{(\infty, w) \mid w\in \Sen\}\subset \Cbar_\wt
\uplus \Cbar_\bt$ is called the \emph{equator} of the formal mating
sphere. 

\smallskip
The \emph{formal mating} 
{\mapfromto {P_\wt \uplus P_\bt} {\Cbar_\wt \uplus \Cbar_\bt} {\Cbar_\wt \uplus \Cbar_\bt}} 
is the map 
$$
P_\wt \uplus P_\bt(z) = 
  \begin{cases}
  P_\wt(z), \qquad &\textrm{if } z\in \C_\wt,\\
  P_\bt(z), \qquad &\textrm{if } z\in \C_\bt,\\
  (\infty,z^d), \qquad &\textrm{for } (\infty,z).
  \end{cases}
$$
\ENDDEF

Evidently $\Cbar_\wt\uplus \Cbar_\bt$ is homeomorphic to $\Sto$ and
$P_\wt \uplus P_\bt$  is a branched covering. 

\begin{definition}[Ray-equivalence]
  \label{def:ray-equivalence}
  The external rays $R_\wt(\zeta)\subset \Cbar_\wt,
  R_\bt(\bar{\zeta})\subset \Cbar_\bt$ of the polynomials $P_\wt, P_\bt$
  are naturally contained in the
  formal mating sphere $\Cbar_\wt \uplus \Cbar_\bt$. The \emph{extended
    external ray} of angle $\zeta\in \Sen$ is the closure of
  $R_\wt(\zeta) \cup R_\bt(\bar{\zeta})$ in the formal mating sphere
  $\Cbar_\wt \uplus \Cbar_\bt$, denoted by $R(\zeta)$. Note that
  $R(\zeta)$ contains the point $(\infty, \zeta)$ of the equator. If the
  filled Julia sets $K_\wt, K_\bt$ are connected and locally connected,
  then $R(\zeta)$ contains exactly one point of the Julia set $J_\wt$,
  as well as exactly one point of the Julia set $J_\bt$. The
  \emph{ray-equivalence} $\sim_\ray$ on the formal mating sphere is defined
  to be the smallest equivalence relation, such that all points of the
  formal mating sphere that are contained in the same extended external
  ray are equivalent. 
\end{definition}

\begin{lemma}
  \label{lem:fmate_not_top_rat}
  The formal mating is never topologically conjugate to a rational
  map.  
\end{lemma}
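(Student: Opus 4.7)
The plan is to exhibit a topological feature of the formal mating that no rational map can possess: a completely invariant open doubly connected region on which the map restricts to a proper degree $d$ self-covering, with both boundary components being non-degenerate Jordan curves. The contradiction will then come from the transformation rule for the conformal modulus of an annulus under a holomorphic covering.

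First, consider the basin of infinity $A_\wt := \C_\wt \setminus K_\wt$ of the first polynomial, viewed as a subset of the formal mating sphere $\Cbar_\wt \uplus \Cbar_\bt$. Since $K_\wt$ is a full continuum (our standing assumption for matings), $A_\wt$ is open and doubly connected in the formal mating sphere, with boundary equal to the two disjoint Jordan curves $J_\wt$ and $\Seninfty$. All critical points of $P_\wt$ lie in $K_\wt$ and $K_\wt$ is completely invariant under $P_\wt$, so $P_\wt$ restricts to an unramified proper degree $d$ covering $A_\wt \to A_\wt$. Consequently the formal mating restricts to a proper degree $d$ self-covering $P_\wt \uplus P_\bt \colon A_\wt \to A_\wt$.

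Suppose for contradiction that $h \colon \Cbar_\wt \uplus \Cbar_\bt \to \Sto$ is a homeomorphism conjugating the formal mating to a rational map $R$ of degree $d$, and set $U := h(A_\wt) \subset \Sto$. Then $U$ is a doubly connected open subset of $\Sto$ whose two boundary components are the Jordan curves $h(J_\wt)$ and $h(\Seninfty)$; in particular both boundary components are non-degenerate continua. By the classical uniformization of doubly connected plane domains, $U$ is conformally equivalent to a round annulus $\{r_1 < \abs{z} < r_2\}$ with $0 < r_1 < r_2 < \infty$, and therefore has finite strictly positive conformal modulus $m := \mod(U) > 0$. The conjugacy turns the self-covering of $A_\wt$ into a topological degree $d$ self-covering of $U$, and because $R$ is holomorphic on all of $\Sto$, the restriction $R|_U \colon U \to U$ is a holomorphic degree $d$ covering of the conformal annulus $U$ onto itself.

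The contradiction now comes from the standard transformation rule for the conformal modulus under a holomorphic covering of annuli: for a degree $d$ holomorphic covering $V \to W$ of finite modulus annuli one has $\mod(V) = \mod(W)/d$, as one sees by lifting to the universal cover strip. Applied to $R|_U \colon U \to U$ this yields $m = m/d$, and since $d \geq 2$ we conclude $m = 0$, contradicting $m > 0$. Hence no such homeomorphism $h$ can exist, so the formal mating is never topologically conjugate to a rational map. The only delicate point in the argument is verifying that $h(J_\wt)$ and $h(\Seninfty)$ are non-degenerate continua (so that $U$ has finite rather than infinite modulus), but this is immediate since $h$ is a homeomorphism and the homeomorphic image of a Jordan curve is a Jordan curve.
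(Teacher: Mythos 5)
Your proof is correct, but it takes a genuinely different route from the paper's. The paper argues locally at periodic points: the equator point $(\infty,1)$ is a fixed point which attracts along the extended external ray $R(1)$ yet repels along the equator, forcing it to be parabolic were the formal mating a rational map; since the same phenomenon occurs at the infinitely many periodic points $(\infty, \e^{2\pi ik/(d^n-1)})$, this contradicts the finiteness of the set of parabolic cycles. You instead argue globally: the basin of infinity $A_\wt = \C_\wt \setminus K_\wt$ is a completely invariant annulus in the formal mating sphere on which $P_\wt \uplus P_\bt$ restricts to an unramified degree $d$ self-covering; after conjugation this would be a holomorphic degree $d$ self-cover of an annulus $U$ of finite nonzero modulus, and the rule $\mod(U) = \mod(U)/d$ forces $\mod(U)=0$, a contradiction. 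Your approach buys a one-shot global argument that avoids any discussion of local fixed-point dynamics and parabolic petals; the paper's approach buys a picture that localizes the failure at explicit points of the equator. One small inaccuracy to fix: $J_\wt$ is in general not a Jordan curve (it may be a dendrite or worse), so you should not describe the boundary components as Jordan curves; what is actually needed, and is true, is that the two complementary continua of $U$ — namely $h(K_\wt)$ and $h(\Cbar_\bt)$ — are non-degenerate (each has more than one point), which already guarantees $0 < \mod(U) < \infty$.
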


\begin{proof}
  Consider the point $(\infty, 1)$ on the equator of the formal mating
  sphere. It is also contained in the extended external ray
  $R(1)$. Note that this is a fixed point of the formal mating $P_\wt
  \uplus P_\bt$. As a consequence of B\"{o}ttcher's theorem (Theorem
  \ref{thm:boettcher}), all points in the interior of $R(1)$ (i.e., all
  points except the endpoints) are converging under iteration of
  $P_\wt \uplus P_\bt$ to $(\infty, 1)$. However points in a small
  neighborhood of $(\infty, 1)$ on the equator are repelled from
  $(\infty, 1)$ under iteration, since $P_\wt \uplus P_\bt$ on $\Sen$
  is topologically conjugate to $z^d \colon \Sen \to \Sen$. For
  rational maps such a behavior can only occur at a parabolic fixed
  point. However the same behavior occurs for the $n$-th iterate of
  the formal mating at each point $(\infty, \e^{2\pi i k/(d^n-1)})$ for
  all $k=0, \dots, d^n-1$. A rational map cannot have infinitely
  many parabolic periodic points.
\end{proof}

If we ask whether the formal mating is in some sense a rational map,
we thus need a notion that is weaker than topological conjugacy. The
most successful in this context is Thurston equivalence. Clearly, the formal
mating is post-critically finite if and only if both polynomials $P_\wt,
P_\bt$ are post-critically finite. 
We may then ask if $P_\wt \uplus P_\bt$ is Thurston equivalent to a
rational map. 

\smallskip
The most important aspect of the formal mating however is that we can
reconstruct the topological mating from it.  

\begin{proposition}[Topological mating from formal mating]
  \label{prop:top_vs_formal}
  Let $P_\wt, P_\bt$ be two monic polynomials of the same degree
  $d\geq 2$ with connected and locally connected Julia sets.  
  \begin{enumerate}[\upshape(1)]
  \item
    \label{item:fmate_simF}
    The formal mating $P_\wt \uplus P_\bt\colon \Cbar_\wt \uplus
    \Cbar_\bt$ descends to the quotient $\Cbar_\wt \uplus
    \Cbar_\bt/\!\sim_\ray$.
  \item 
    \label{item:fmate_tmate}
    The quotient map is (topologically conjugate
    to) the topological mating $P_\wt \mate P_\bt$. In particular
    $K_\wt \mate K_\bt$ is homeomorphic to $\Cbar_\wt \uplus
    \Cbar_\bt/\!\sim_\ray$.
  \item 
    \label{item:simF_Moore}
    The space $K_\wt \mate K_\bt$ is a topological sphere if and
    only if the ray-equivalence $\sim_\ray$ is of Moore-type (see
    Definition~\ref{def:Moore-type}).   

  \end{enumerate}  
\end{proposition}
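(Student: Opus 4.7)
The plan is to treat the three parts in order, with the bulk of the work concentrated in part~(2): once we identify the formal-mating quotient by $\simr$ with the topological mating, part~(3) is essentially Moore's theorem.

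For~(1), I would invoke \lemref{lem:sim_rel_finv}. The ray-equivalence $\simr$ is generated by the binary relation $R$ defined by $x\, R\, y$ iff $x$ and $y$ lie in a common extended external ray $R(\zeta)$. By \thmref{thm:boettcher} the polynomial $P_\wt$ sends $R_\wt(\zeta)$ onto $R_\wt(\zeta^d)$ and $P_\bt$ sends $R_\bt(\bar\zeta)$ onto $R_\bt(\overline{\zeta^d})$, while $P_\wt\uplus P_\bt$ sends the equator point $(\infty,\zeta)$ to $(\infty,\zeta^d)$ by construction. Hence $P_\wt\uplus P_\bt$ maps $R(\zeta)$ into $R(\zeta^d)$, so $R$ is $(P_\wt\uplus P_\bt)$-invariant, and \lemref{lem:sim_rel_finv} makes its generated equivalence $\simr$ invariant as well. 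The formal mating therefore descends, and by \lemref{lem:sim_dyn} the descended map is continuous.

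For~(2), I would build an explicit continuous surjection
\[
\pi\colon \Cbar_\wt\uplus \Cbar_\bt \longrightarrow K_\wt\mate K_\bt
\]
that equals the canonical composition $K_j\hookrightarrow K_\wt\sqcup K_\bt \twoheadrightarrow K_\wt\mate K_\bt$ on each filled Julia set and collapses each extended ray $R(\zeta)$, together with its two landing endpoints and its equator point, to the single class $[\sigma_\wt(\zeta)]=[\sigma_\bt(\bar\zeta)]$. Continuity in the interior of the $\C_j$'s is trivial; continuity at a point of $J_j$ and at an equator point $(\infty,\zeta)$ is where local connectivity enters through \corref{cor:boettcher_ext}: sequences on rays approaching a Julia set point converge via the Carath\'eodory loop, and sequences approaching an equator point from either side converge through the B\"ottcher coordinate to the landing angle $\zeta$ on the appropriate side, whose two $\sim$-identified images are the target class. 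Surjectivity is built in. Next I identify the fibers of $\pi$ with the $\simr$-classes: by construction $\pi$ is constant on each extended ray, and any chain $\sigma_\wt(\zeta_0)\sim\sigma_\bt(\bar\zeta_0)=\sigma_\bt(\bar\zeta_1)\sim\sigma_\wt(\zeta_1)=\cdots$ realizing a $\sim$-identification translates into a chain of extended rays sharing landing points, so two points in the formal mating sphere are $\pi$-equivalent iff they are $\simr$-equivalent. Finally \lemref{lem:sim_induced} applied to $\pi$ promotes the induced continuous bijection $(\Cbar_\wt\uplus\Cbar_\bt)/\simr\to K_\wt\mate K_\bt$ to a homeomorphism, using compactness of the formal mating sphere; the resulting homeomorphism conjugates the descended formal mating to $P_\wt\mate P_\bt$ by the invariance established in~(1).

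For~(3), Moore's theorem \thmref{thm:Moore} furnishes the ``if'' direction: if $\simr$ is of Moore-type, then $(\Cbar_\wt\uplus\Cbar_\bt)/\simr$ is a topological $\Sto$, and by~(2) this coincides with $K_\wt\mate K_\bt$. For ``only if,'' assume $K_\wt\mate K_\bt\cong\Sto$, so by~(2) the quotient by $\simr$ is a sphere. This quotient is Hausdorff, so $\simr$ is closed by Lemma~\ref{def:closed_eq}\itemiref{item:closed_qtHausdorff}; non-triviality is automatic since $d\geq 2$; each $\simr$-class is a union of extended rays chained together by common landing points in $J_\wt\cup J_\bt$, each ray being a compact connected continuum, hence the class is connected; and non-separation follows because the quotient map onto $\Sto$ is a monotone continuous surjection between compact locally connected Hausdorff spaces (its fibers, the $\simr$-classes, are connected), so the preimage of the open disk $\Sto\smallsetminus\{[x]\}$ remains connected, which is exactly the statement that $[x]$ does not separate the formal mating sphere. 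Thus $\simr$ satisfies all four conditions of \defref{def:Moore-type}.

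The hardest step is the continuity-and-fiber identification for the collapsing map $\pi$ in part~(2); the interior arguments are routine, but handling the equator and the Julia set limits rests crucially on the locally connected hypothesis via the Carath\'eodory loops. Once this is done, parts~(1) and~(3) reduce to invariance bookkeeping and Moore's theorem.
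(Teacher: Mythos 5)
Your strategy for part~(2) is a genuine variant of the paper's: where the paper defines a bijection $h$ directly between the two quotients and then checks openness in both directions by hand, you instead build the collapse map $\pi$ on the ambient formal mating sphere, verify its continuity via the Carath\'eodory/B\"ottcher structure, and then try to descend. The continuity verification is fine. The gap is in the last step. You invoke \lemref{lem:sim_induced} ``using compactness of the formal mating sphere'' to promote the induced bijection to a homeomorphism, but that appeals to the sufficient criterion ``$S$ compact and $S'$ Hausdorff'' --- and $S' = K_\wt \mate K_\bt$ is \emph{not} known to be Hausdorff at this point. Indeed, the possibility that $\sim$ fails to be closed (so that $K_\wt\mate K_\bt$ is non-Hausdorff) is exactly the \emph{Hausdorff obstruction} the paper discusses, and part~(2) of the proposition is stated with no such hypothesis. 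Compactness of the source alone does not make the induced continuous bijection a homeomorphism.

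The repair is not hard and brings you closer to the paper's argument: use instead the ``iff'' criterion in \lemref{lem:sim_induced}, namely that $\pi(U)$ is open for every $\simr$-saturated open $U\subset \Cbar_\wt\uplus\Cbar_\bt$. Since every $\simr$-class meets $K_\wt\sqcup K_\bt$ and $\sim|_{K_\wt\sqcup K_\bt}=\simr|_{K_\wt\sqcup K_\bt}$, for saturated $U$ one has $\pi(U)=\Pi(U\cap(K_\wt\sqcup K_\bt))$ and $\Pi^{-1}(\pi(U))=U\cap(K_\wt\sqcup K_\bt)$, which is open in the subspace topology of $K_\wt\sqcup K_\bt$; hence $\pi(U)$ is open in $K_\wt\mate K_\bt$, with no Hausdorffness needed. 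Parts~(1) and~(3) are essentially correct and match the paper's argument; in~(3) your non-separation step via the preimage-of-connected-is-connected property of closed monotone maps is more machinery than the paper's direct observation (the separating class yields a saturated open partition of the complement, pushing forward to a disconnection of $\Sto\smallsetminus\{[x]\}$), but it reaches the same conclusion.
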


\begin{proof}
  \eqref{item:fmate_simF}
  The formal mating $P_\wt \uplus P_\bt$ clearly maps each extended
  external ray to another extended external ray. It follows that
  $\sim_\ray$ is invariant with respect to $P_\wt \uplus P_\bt$, hence
  this map descends to the quotient $\Cbar_\wt \uplus
  \Cbar_\bt/\sim_\ray$ (see Lemma~\ref{lem:sim_rel_finv}).  

  \smallskip
  \eqref{item:fmate_tmate}
  We identify $K_\wt\sqcup K_\bt$ with $K_\wt \sqcup K_\bt \subset
  \Cbar_\wt \uplus \Cbar_\bt$. Then the map $P_\wt \uplus P_\bt$
  clearly agrees with $P_\wt \sqcup P_\bt$ on $K_\wt\sqcup K_\bt$. 

  Note that the extended external ray $R(\zeta)$ contains the points
  $\sigma_\wt(\zeta)\in J_\wt, \sigma_\bt(\bar{\zeta})\in
  J_\bt$. Thus the equivalence relation $\sim_\ray$ restricted to $K_\wt
  \sqcup K_\bt$ (viewed as subsets of the formal mating sphere) is
  equal to the equivalence relation $\sim$ from the topological
  mating.

  Each point $x\in \Cbar_\wt \uplus \,\Cbar_\bt$ not
  contained in the filled Julia sets $K_\wt, K_\bt \subset \Cbar_\wt
  \uplus \Cbar_\bt$ is contained in one extended external ray. Put
  differently, each equivalence class $[x]_\ray$ of $\sim_\ray$ contains a
  representative $z\in K_\wt \sqcup K_\bt \subset \Cbar_\wt \uplus
  \Cbar_\bt$. The map 
  \begin{equation*}
    h\colon (\Cbar_\wt \uplus \Cbar_\bt)/\!\sim_\ray \;\to
    K_\wt \mate K_\bt
  \end{equation*}
  is now defined as follows.
  Each ray-equivalence class $[x]_\ray \in
  (\Cbar_\wt \uplus \Cbar_\bt)/\!\sim_\ray$ is mapped by $h$ to $[z] \in
  K_\wt \mate K_\bt = (K_\wt \sqcup K_\bt)/\!\sim$, where $z\in K_\wt
  \sqcup K_\bt$ is a representative of $[x]_\ray$. This is a well-defined 
  bijection. Furthermore $h$ conjugates $(P_\wt \uplus P_\bt)/\sim_\ray$
  to the topological mating $P_\wt \mate P_\bt$.   
  
  \smallskip
  One subtle point still needs to be verified however. Namely that the
  quotient topologies induced by $\sim$ and $\sim_\ray$ agree. More
  precisely we need to verify that $h$ maps the quotient topology on
  $(\Cbar_\wt \uplus \Cbar_\bt)/\!\sim_\ray$ to the quotient topology on
  $K_\wt \mate K_\bt = (K_\wt \sqcup K_\bt)/\!\sim$. 

  We denote by $\Pi_\ray \colon \Cbar_\wt \uplus \Cbar_\bt \to (\Cbar_\wt
  \uplus \Cbar_\bt)/\!\sim_\ray$, $\Pi \colon K_\wt \sqcup K_\bt \to K_\wt
  \mate K_\bt = (K_\wt \sqcup K_\bt)/\!\sim$ the quotients maps.  

  Let $U\subset (\Cbar_\wt \uplus \Cbar_\bt)/\!\sim_\ray$ be open. This
  is the case if and only if $[U]_\ray:= \Pi_\ray^{-1}(U) \subset \Cbar_\wt
  \uplus \Cbar_\bt$ is open. Then $[U]_\ray \cap (K_\wt \sqcup K_\bt)$ is
  open. Note that $[U]_\ray \cap (K_\wt \sqcup K_\bt)=
  \Pi^{-1}(h(U))$. Thus $h(U)\subset K_\wt \mate K_\bt$ is open. 

  Now let $h(U)\subset K_\wt \mate K_\bt$ be open. Then
  $\Pi^{-1}(h(U))\subset K_\wt \sqcup K_\bt$ is open. From
  \eqref{eq:Cara_semi} it follows that $\Pi_\ray^{-1}(U)\cap
  \Seninfty\subset \Cbar_\wt \uplus \Cbar_\bt$ is an open subset of
  the equator $\Seninfty$ of the formal mating sphere. From this it
  follows that $\Pi_\ray^{-1}(U)$ is open, hence $U\subset (\Cbar_\wt
  \uplus \Cbar_\bt)/\!\sim_\ray$ is open. Thus the map $h$ is a
  homeomorphism. 

  \smallskip
  \eqref{item:simF_Moore}
  Assume first that $\sim_\ray$ is of Moore type.
  From \eqref{item:fmate_tmate} as well as Moore's theorem
  (Theorem~\ref{thm:Moore}, see also Lemma~\ref{lem:sim_induced}) it
  immediately follows that $K_\wt \mate K_\bt$ is a topological
  sphere. 

  Assume now that $K_\wt \mate K_\bt$ is a topological sphere. 
  Since $(\Cbar_\wt \uplus \Cbar_\bt)/\! \sim_\ray$ is homeomorphic to
  the topological sphere $K_\wt \mate K_\bt$ by
  \eqref{item:fmate_tmate}, it follows that $\sim_\ray$ is not trivial,
  i.e., has at least two distinct equivalence classes. 
  
  Each equivalence class $[x]_\ray$ of $\sim_\ray$ is compact. Otherwise
  $(\Cbar_\wt \uplus \Cbar_\bt)/\!\sim_\ray$ is not Hausdorff,
  contradicting the fact that $\Cbar_\wt \uplus \Cbar_\bt$ is
  homeomorphic to the sphere $K_\wt \mate K_\bt$ by
  \eqref{item:fmate_tmate}.

  Each equivalence class $[x]_\ray$ is connected by construction. 

  Assume now that there is an equivalence class $[x]_\ray$ that separates
  the sphere $\Cbar_\wt \uplus \Cbar_\bt$. Then $[x]_\ray \in (\Cbar_\wt
  \uplus \Cbar_\bt)/\! \sim_\ray$ separates, i.e., $(\Cbar_\wt \uplus
  \Cbar_\bt)/\!\sim_\ray \setminus \{[x]_\ray\}$ is not connected. This shows that $(\Cbar_\wt \uplus
  \Cbar_\bt)/\!\sim_\ray$ is not homeomorphic to the sphere $K_\wt \mate
  K_\bt$.

\end{proof}

Assume now that the two monic polynomials $P_\wt, P_\bt$ of the same
degree $d\geq 2$ have connected and locally connected Julia set.   
Let $\sim_\infty$ be the restriction of $\sim_\ray$ to the equator
$\Seninfty$. This equivalence relation may be described as
follows. Let $\sim_\wt, \sim_\bt$ be the equivalence relations on $\Sen$ induced
by the Carath\'{e}odory loop $\sigma_{\wt, \bt} \colon \Sen \to
J_{\wt, \bt}$, i.e., for all $\zeta, \xi\in \Sen$ it holds
\begin{align*}
  \zeta \sim_\wt \xi \quad &:\Leftrightarrow \quad \sigma_\wt(\zeta)=
  \sigma_\wt(\xi), \text{ and}
  \\
  \zeta \sim_\bt \xi \quad &:\Leftrightarrow \quad \sigma_\bt(\bar{\zeta})=
  \sigma_\bt(\bar{\xi}). 
\end{align*}
Note that from Lemma~\ref{lem:sim_induced} it follows that $\sim_\wt,
\sim_\bt$ are closed. 
Identifying $\Sen$ with $\Seninfty$ it holds that $\sim_\infty$ is the
equivalence relation generated by $\sim_\wt$ 
and $\sim_\bt$ (i.e., the join of  $\sim_\wt, \sim_\bt$ in the
lattice of equivalence relations).  This means that $\zeta\sim_\infty
\xi$ if and only if there is a finite sequence $w_0,\dots, w_N\in
\Sen_\infty$ such that 
\begin{equation*}
  \zeta=w_0 \sim_\wt w_1 \sim_\bt \dots \sim_\wt w_{N-1}\sim_\bt w_N= \xi.
\end{equation*}
Note that in the above we may choose $w_0=w_1$ and/or $w_{N-1} =
w_N$. It is sometimes conceptually easier to deal with $\sim_\infty$
instead of $\sim_\ray$. We have the following.

\begin{lemma}
  \label{lem:sim_inf_ray}
  The ray-equivalence $\sim_\ray$ (on the formal mating sphere $\Cbar_\wt \uplus
  \Cbar_\bt$) is closed if and only if
  $\sim_\infty$ is closed (on $\Seninfty$).  
\end{lemma}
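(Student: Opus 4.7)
The plan is to reduce both implications to the sequential criterion in item~\itemiref{item:closed_seqclosed} of Lemma~\ref{def:closed_eq}, using an explicit description of the $\sim_\ray$-classes. For $p$ in the interior of $K_\wt$ or $K_\bt$ one has $[p]_\ray=\{p\}$; otherwise $p$ lies on at least one extended ray and the set $A(p):=\{\eta\in\Sen\mid p\in R(\eta)\}$ is non-empty and closed in $\Sen$. A generating chain for $\sim_\ray$ can transition from one ray to another only at a shared landing point on one of the Julia sets, i.e.\ through a $\sim_\wt$- or $\sim_\bt$-identification of angles; unraveling this yields
\[
  [p]_\ray=\bigcup_{\eta\in A(p)^\dagger}R(\eta),
\]
where $A(p)^\dagger$ denotes the $\sim_\infty$-saturation of $A(p)$ in $\Sen$. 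Via $\zeta\leftrightarrow(\infty,\zeta)$, the restriction of $\sim_\ray$ to the equator $\Seninfty$ then coincides with $\sim_\infty$.

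For the forward direction, assume $\sim_\ray$ is closed. Given $\zeta_n\to\zeta$ and $\xi_n\to\xi$ in $\Sen$ with $\zeta_n\sim_\infty\xi_n$, we have $(\infty,\zeta_n)\sim_\ray(\infty,\xi_n)$ and $(\infty,\zeta_n)\to(\infty,\zeta)$, $(\infty,\xi_n)\to(\infty,\xi)$ in $\Cbar_\wt\uplus\Cbar_\bt$; item~\itemiref{item:closed_seqclosed} applied to $\sim_\ray$ gives $(\infty,\zeta)\sim_\ray(\infty,\xi)$, and the description above forces $\zeta\sim_\infty\xi$. Compactness of each $\sim_\infty$-class is obtained by intersecting the compact $\sim_\ray$-class of $(\infty,\zeta)$ with the compact equator.

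For the reverse direction, assume $\sim_\infty$ is closed. The technical ingredient I would set up first is a continuous parameterization of the family of extended rays: a map $\Psi\colon\Sen\times[-1,1]\to\Cbar_\wt\uplus\Cbar_\bt$ with $\Psi(\eta,0)=(\infty,\eta)$, $\Psi(\eta,1)=\sigma_\wt(\eta)$, $\Psi(\eta,-1)=\sigma_\bt(\bar\eta)$, and $\Psi(\eta,t)\in R_\wt(\eta)\cup R_\bt(\bar\eta)$ for $t\in(-1,0)\cup(0,1)$. Continuity at $t=\pm1$ uses Corollary~\ref{cor:boettcher_ext}, while continuity at $t=0$ follows from the construction of the formal mating sphere. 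Because $A(p)^\dagger$ is closed (hence compact) in $\Sen$ when $\sim_\infty$ is closed, the class $[p]_\ray=\Psi(A(p)^\dagger\times[-1,1])$ is compact, as a continuous image of a compact set.

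It then remains to verify the sequential criterion. Take $z_n\to z$, $w_n\to w$ with $z_n\sim_\ray w_n$, and discard the trivial indices where $z_n=w_n$; then neither $z_n$ nor $w_n$ lies in the interior of $K_\wt$ or $K_\bt$, so the structural description lets us pick $\zeta_n\in A(z_n)$, $\xi_n\in A(w_n)$ with $\zeta_n\sim_\infty\xi_n$ and parameters $t_n,s_n\in[-1,1]$ with $z_n=\Psi(\zeta_n,t_n)$, $w_n=\Psi(\xi_n,s_n)$. Passing to a subsequence we may assume $\zeta_n\to\zeta$, $\xi_n\to\xi$, $t_n\to t$, $s_n\to s$. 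Closedness of $\sim_\infty$ gives $\zeta\sim_\infty\xi$, while continuity of $\Psi$ gives $z=\Psi(\zeta,t)$ and $w=\Psi(\xi,s)$, so $\zeta\in A(z)$, $\xi\in A(w)$, and therefore $z\sim_\ray w$. The main obstacle I anticipate is producing the parameterization $\Psi$ cleanly---simultaneously continuous at the Julia sets and at the equator, and compatible with the identification defining $\Cbar_\wt\uplus\Cbar_\bt$---after which everything else is a routine unraveling of the definitions.
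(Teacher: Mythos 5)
Your proof is correct, and it takes a genuinely different route from the paper's. The paper argues directly at the level of sequences: given $x_n\to x$ with $x_n\sim_\ray y_n$, it produces angles $w_n\in[x_n]_\infty$ and shows (using the continuous Böttcher extension $\overline\varphi$, the saturated-interior property of closed relations, and a subsequence argument) that a limit point $w$ lies in $[x]_\infty$; ditto on the other side; then closedness of $\sim_\infty$ finishes. You instead package the entire family of extended rays into one continuous map $\Psi\colon\Sen\times[-1,1]\to\Cbar_\wt\uplus\Cbar_\bt$, obtain compactness of each $[p]_\ray=\Psi(A(p)^\dagger\times[-1,1])$ at once, and run the sequential criterion by lifting to $\Sen\times[-1,1]$ and passing to convergent subsequences there. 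The technical input is the same in both proofs (Carathéodory/Böttcher continuity, here Corollary~\ref{cor:boettcher_ext}), but your arrangement is cleaner: it handles $x\in J_\wt$ and $x\notin K_\wt$ uniformly and makes the compactness of ray-classes manifest.

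Two small points you should fill in. First, the construction of $\Psi$, which you rightly flag as the crux: continuity at $t=\pm1$ is exactly Corollary~\ref{cor:boettcher_ext}, and continuity at $t=0$ uses the normalization $\lim_{z\to\infty}z/\varphi_i(z)=1$ together with the identification $(\infty_\wt,w)\sim(\infty_\bt,\bar w)$ — both branches of $\Psi$ converge to the same equator point, so the map is well defined and continuous across $t=0$. Second, "discard the trivial indices where $z_n=w_n$" requires a sentence: if $z_n=w_n$ for all but finitely many $n$ then $z=w$ trivially and there is nothing to prove; otherwise pass to the infinite subsequence where $z_n\neq w_n$. With these two points made explicit, the argument is complete and correct, and the structural identity $[p]_\ray=\bigcup_{\eta\in A(p)^\dagger}R(\eta)$ (with $A(p)^\dagger$ equal to a single $\sim_\infty$-class) is a nice byproduct that the paper's proof does not record explicitly.
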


\begin{proof}
  Clearly $\sim_\ray$ being closed implies that $\sim_\infty$ is
  closed. 

  \smallskip
  Assume now that $\sim_\infty$ is closed. Let $(x_n)_{n\in \N},
  (y_n)_{n\in \N}$ be convergent sequences in the formal mating sphere
  $\Sto$ such that $x_n \sim_\ray y_n$ for all $n\in \N$. We need to
  show that $x:= \lim x_n \sim_\ray y:= \lim y_n$. We can assume
  without loss of generality that $x\in \Cbar_\wt\subset\Cbar_\wt
  \uplus \Cbar_\bt$.

  If $x_n=y_n$ for infinitely many $n\in\N$ there is nothing to
  prove. Thus we can assume $x_n \neq y_n$ for all $n\in \N$. This
  implies that $[x_n]_\ray= [y_n]_\ray$ consist of more than a point,
  i.e., contains an extended external ray. Thus 
  $$[x_n]_\infty:=
  [x_n]_\ray \cap \Seninfty\neq \emptyset.$$

  If $x$ is in the interior of $K_\wt$ then $x_n$ is in
  the interior of $K_\wt$ for sufficiently large $n$. By the
  definition of $\sim_\ray$ it follows that $x_n=y_n$ for large
  $n$. Thus $x$ is not in the interior of $K_\wt$ by our
  assumptions. Hence $[x]_\infty= [x]_\ray \cap \Seninfty\neq \emptyset$.

  \smallskip
  We now prove that there is a sequence $(w_n)_{n\in
    \N}\subset\Seninfty$ with $w_n \in [x_n]_\infty$ (for all $n\in \N$)
  such that for a subsequence $(w_{k})_{k\in I}$ ($I\subset \N$
  infinite) it holds
  \begin{equation}
    \label{eq:claimwnk}
    w_{k} \to w  \in [x]_\infty \text{ as } k\in I\to \infty. 
  \end{equation}
  This is clear if $x\notin K_\wt$. Assume now that $x\in
  J_\wt$. Consider the extension of the B\"{o}ttcher map
  $\overline{\varphi}_\wt\colon \Chat \setminus \D\to \Chat \setminus
  \overset{\circ}{K}_\wt$. It is continuous, see
  Corollary~\ref{cor:boettcher_ext}. The equivalence relation on
  $\Chat\setminus \D$ induced by $\overline{\varphi}_\wt$ is still
  denoted by $\sim_\wt$. By Lemma~\ref{lem:sim_induced} $\sim_\wt$ is
  closed.  

  Note that $(\overline{\varphi})^{-1}(x) = [x]_\wt\in \Sen = \partial
  (\Chat \setminus \D)$. Let $U\subset (\Chat\setminus \D)$ be an open 
  neighborhood of $[x]_\wt\in (\Chat \setminus \D)$. Let $U^*$ be the
  saturated interior, i.e., the set of all equivalence classes
  contained (of $\sim_\wt$) in $U$. This is open by
  Lemma~\ref{def:closed_eq}. By definition of the quotient topology it
  follows that $V:=\overline{\varphi}(U^*)\subset \Cbar_\wt$ is an open
  set containing $x$. Since $x_n\to x$ it follows that $x_n\in V$ for
  sufficiently large $n\in \N$. It follows that $[x_n]_\wt$ is
  contained in $U$ (for sufficiently large $n\in\N$). Pick an
  arbitrary $w_n \in [x_n]_\wt$. Since $[x]_\wt$
  is compact, we can extract a convergent subsequence of $(w_n)$
  converging to a point $w\in [x]_\wt$. Finally we note that $[x]_\wt
  \subset [x]_\infty$, finishing the proof of \eqref{eq:claimwnk}. 

  \smallskip
  Using the exact same argument we construct a sequence $u_{k}\in
  [x_{k}]_\infty= [y_{k}]_\infty$ for $k\in I$, and a subsequence
  $(u_{m})_{m\in I'}$ converging to $u\in [y]_\infty$ ($I'\subset
  I\subset \N$ is an infinite set). 

  By construction it follows that $w_m\sim_\infty u_m$  for all $m\in
  I'$. Since $\sim_\infty$ is closed by assumption it follows that
  $w\sim_\infty u$. Thus $[x]_\infty = [w]_\infty=
  [u]_\infty=[y]_\infty$. Thus $[x]_\ray = [y]_\ray$  or $x\sim_\ray
  y$ as desired. 
\end{proof}


\subsection{Shishikuras degenerate Matings}
\label{sec:Shishikuras_degenerate_matings}
It may happen that two post-critically finite polynomials are topologically mateable, 
but that the formal mating is not Thurston equivalent to a rational map. 
More specifically some post-critical points of $P_\wt$ may be ray-equivalent to 
some post-critical points of $P_\bt$. In this case the formal mating will have 
so-called Levi-cycles. Shishikura constructs in the paper \cite{Shishikura} 
a mating intermediate between the topological and the formal mating. 
For this mating the appropriate post-critical points of the formal
mating have been merged, so that the resulting mapping no longer has
any Levi-cycles. 

\subsection{More on the ray-equivalence}
\label{sec:ray-equivalence}

Each ray-equivalence class $[x]_\ray$ may be viewed as a graph. Namely
the set of edges is given by the set of all extended external rays
$R(\zeta)$ contained in $[x]_\ray$. The points in the Julia sets $J_\wt,
J_\bt$ contained in $[x]_\ray$ are the vertices, i.e., the set of
vertices is given by $[x]_\ray \cap (J_\wt \sqcup J_\bt)$. Such a vertex
$v\in [x]_\ray \cap (J_\wt \sqcup J_\bt)$ is incident to the extended
external ray $R(\zeta)\subset [x]_\ray$ (viewed as an edge) if and only
if $v\in R(\zeta)$. Clearly each such graph is connected. 
We call $[x]_\ray$ a \emph{tree} if the just defined
corresponding graph is a tree, otherwise $[x]_\ray$ is called
\emph{cyclic}. 
The diameter of $[x]_\ray$ is defined to be the diameter of the set viewed as a graph. 
If $[x]_\ray$ has infinite diameter, we say $[x]_\ray$ is an
\emph{infinite ray-connection}. Recall from the discussion in
Section~\ref{sec:lamin-polyn} that the number of edges incident to a
vertex is finite. It follows that each $[x]_\ray$ viewed as a graph is
at most countable, i.e., has at most countably many edges. Put
differently $[x]_\infty = [x]_\ray \cap \Seninfty$ is at 
worst countable.

\smallskip
Evidently cyclic ray-equivalence classes do exists: 
Take as $P_\wt$ any polynomial $P$ for which the Carath\'{e}odory loop
$\si\colon \Sen \to J$ is not injective and let 
$P_\bt:= \overline{P(\bar{z})}$. 
On the other hand it is not known whether infinite ray-connections exist.

\RREFPROP{A. Epstein, unpublished}{epstein}
\label{prop:ray-connections}
Let $P_\wt, P_\bt$ be two monic polynomials of the same degree $d\geq
2$ with connected and locally connected Julia sets. Then the following holds.

\begin{itemize}
\item If all equivalence classes of $\sim_\ray$ are ray-trees
and of uniformly bounded diameter, then the topological space
\hbox{$K_\wt \mate K_\bt$} is homeomorphic to $\Sto$.  
\item If $\sim_\ray$ has a cyclic ray-equivalence class or an infinite ray-connection, 
then \hbox{$K_\wt \mate_\ray K_\bt$} is not homeomorphic to $\Sto$.
\end{itemize}
\ENDPROP

\PROOF
To show the first statement we first note that $K_\wt\mate K_\bt$ is 
homeomorphic to $(\Cbar_\wt \uplus \Cbar_\bt)/\!\sim_\ray$ by
Lemma~\ref{prop:top_vs_formal}~\eqref{item:fmate_tmate}. Thus we will
prove the statement by showing that the equivalence relation
$\sim_\ray$ (on the topological sphere $\Cbar_\wt \uplus \Cbar_\bt$)
is of Moore-type. 

\smallskip
Assume now that the equivalence classes $[x]_\ray$ are trees of
uniformly bounded diameter. 
Any non-trivial equivalence class $[x]_\ray$ of bounded diameter is
connected as well as compact, since it is a finite union of compact
sets (i.e., of the extended external rays). Furthermore each
$[x]_\ray$ is by assumption a tree, thus $(\Cbar_\wt \uplus
\Cbar_\bt)\setminus [x]_\ray$ is connected.

We now show that $[x]_\ray$ is closed. By Lemma~\ref{lem:sim_inf_ray}
it is enough to show that the restriction of $\sim_\ray$ to the
equator $\Seninfty$, i.e., $\sim_\infty$, is closed.  Recall that we
assumed that the diameter of the equivalence classes $[x]_\ray$ is
uniformly bounded. This is equivalent to the property that the size of
$[x]_\infty$ is uniformly bounded.

Let $(x_n), (y_n)\in \Seninfty$ be convergent sequences with $x_n
\sim_\infty y_n$ for all $n\in \N$. This means for each $n\in \N$
there are points $w^0_n, \dots, w^N_n\in \Seninfty$  with  
$$x_n=w^0_n\sim_\wt w^1_n \sim_\bt \dots \sim_\bt w^N_n = y_n.$$ 
Taking
a subsequence we can assume that all sequences $(w^j_n)_{n\in\N}$
converge. Since $\sim_\wt, \sim_\bt$ are closed it follows that $x=
\lim x_n \sim_\infty y=\lim y_n$ as desired. 

Since each equivalence class $[x]_\infty$ is finite, it follows that
$\sim_\ray$ is not trivial. Thus $\sim_\ray$ is of Moore-type. By
Moore's Theorem, i.e., Theorem~\ref{thm:Moore}, it follows that
$(\Chat\uplus \Chat)/\sim_\ray$ is homeomorphic to $\Sto$.

\smallskip
We now prove the second statement. First assume that
there exists a cyclic equivalence class $[x]_\ray$. Removing the
point $[x]_\ray$ from the space $(\Chat_\wt
\uplus \Chat_\bt)/\sim_\ray$ yields a disconnected space. Since the
sphere $\Sto$ with a single point removed is 
connected, it follows that $(\Chat_\wt \uplus \Chat_\bt)/\sim_\ray$ is not
homeomorphic to $\Sto$. 

\smallskip
Finally assume that there exists an equivalence class $[x]_\ray$ of
infinite diameter.  Then we can find in $[x]_\ray$ an infinite
ray-path, i.e., a continuous injective map $\gamma\colon [0,\infty)\to
[x]_\ray$ that covers infinitely many (distinct) extended external
rays. Furthermore we choose the parametrization of $\gamma$ as follows
for convenience. For each $n\in \N_0$ the set $\gamma([n,n+1])$ is an extended
external ray, $\gamma(n)\in K_\wt$ for all even $n\in \N_0$,
$\gamma(n)\in K_\bt$ for all odd $n\in \N_0$. Finally we require that
$\gamma(n+1/2)\in \Seninfty$ for all $n\in \N_0$.  

Assume that the restriction of $[x]_\ray$ to the equator $\Seninfty$,
i.e., $[x]_\infty= [x]_\ray \cap \Seninfty$, is not closed. 
Then $\sim_\ray$ is not closed (see
Lemma~\ref{lem:sim_inf_ray}), which implies that $(\Cbar_\wt \uplus
\Cbar_\bt)/\sim_\ray$ is not Hausdorff by Lemma~\ref{def:closed_eq},
hence not homeomorphic to $\Sto$. 
 
Thus we assume now that $[x]_\infty$ is closed. This means that 
all the accumulation points of the sequence 
$\{\ga(n+\frac 1 2)\}_{n\in \N_0} \subset \Seninfty$ 
are contained in $[x]_\infty$. 
In this case note first that by possibly replacing an initial segment of $\ga$ by some 
other ray-path and reparametrizing, we can suppose 
$\ga(\frac 1 2)$ is an accumulation point of the sequence
$\{\ga(n+\frac 1 2)\}_{n\in \N}$. 
However not all the points of this sequence can be accumulation points of the sequence. 
Because then the set of accumulation points of the sequence would contain a Cantor set, 
which by closedness of $[x]_\infty$ would also be a subset of this set, 
contradicting that this set is countable. 
Thus there is some value $m>0$ for which $\ga(m+\frac 1 2)$ is an isolated point 
of $[x]_\infty$.
 
Let $I_0$ and $I_1$ denote the open intervals of $\Seninfty\setminus[x]_\infty$ 
neighboring $\ga(m+\frac 1 2)$. We now show that these two intervals
belong to two disjoint  
connected components of the open set 
$\Cbar_\wt\uplus \Cbar_\bt\Sm [x]_\ray$. 
If not there exists a curve 
{\mapfromto \kappa {[0,1]} {\Cbar_\wt\uplus \Cbar_\bt\Sm [\ga(\frac 1 2)]_\ray}} with 
$\kappa(0)\in I_0$ and $\kappa(1)\in I_1$. 
And such a curve together with the arc of $\Seninfty$ 
between $\kappa(0)$ and $\kappa(1)$ would separate $\ga([0,m+\frac 1 2[)$ from 
$\ga(]m+\frac 1 2,\infty[)$, which contradicts that the later accumulates $\ga(\frac 1 2)$. 
Hence the class $[\ga(\frac 1 2)]_\ray$ is separating and thus 
is an obstruction to $K_\wt \mate_\ray K_\bt$ being homeomorphic to $\Sto$.
\ENDPROOF

\REM
Note that the previous theorem does not cover all cases. Namely the
case when each ray-equivalence class has bounded diameter, but the bound is not
uniform, is not included. This means there is a sequence
$\{[x_n]_\ray\}_{n\in \N}$ of ray-equivalence classes whose diameters
tend to $\infty$. It seems likely 
that in this case one may obtain from a Cantor diagonal type argument
a ray-equivalence class with infinite diameter. We do not have a
proof for this however. 
\ENDREM

We are now ready to prove that in the absence of a Moore obstruction
the topological mating results in a branched covering of the sphere.

\begin{proof}[Proof of
  Proposition~\ref{prop:mating_branched_covering}]

  Let $f =P_\wt \mate P_\bt$ be the topological mating of the monic
  polynomials $P_\wt, P_\bt$ of the same degree $d$, and $K_\wt, K_\bt$
  their filled Julia sets. We assume that $K_\wt \mate K_\bt$ is
  topologically a sphere, which is furthermore identified with
  $(\Cbar_\wt \uplus \Cbar_\bt)/\!\sim_\ray$ by
  Proposition~\ref{prop:top_vs_formal}. The formal mating is denoted
  by $F= (P_\wt \uplus P_\bt)/\!\simr$. 

  \smallskip
  Consider first a point $[y]\in K_\wt \mate K_\bt$ such that the
  corresponding ray-equivalence class $[y]_\ray\subset \Cbar_\wt
  \uplus \Cbar_\bt$ does not contain any critical value of $P_\wt,
  P_\bt$. 

  Let $[x]_\ray$ be a preimage (i.e., a component of the preimage) of $[y]_\ray$ by
  $F$. Note that $[x]_\ray$ does not contain any critical
  point of $P_\wt$ or $P_\bt$. This means that distinct extended
  external rays $R(\zeta), R(\zeta')\subset [x]_\ray$ incident to the
  same vertex $v\in J_\wt \sqcup J_\bt$ are mapped by the formal
  mating $P_\wt\uplus P_\bt$ to distinct external rays. Equivalently
  the angles $\zeta, \zeta'$ are mapped to distinct angles by $z^d$.  

  Let $R(\zeta), R(\zeta')$ be two distinct extended external rays in
  $[x]_\ray$ not intersecting in a vertex $v\in J_\wt \sqcup J_\bt$. 
  If they are mapped to the same extended external ray by $F$
  it follows that $[y]_\ray= F([x]_\ray)$ is cyclic. This cannot happen by
  Proposition~\ref{prop:ray-connections}. Equivalently the angles
  $\zeta, \zeta'$ are mapped to distinct points by $z^d$. This means
  that $[x]_\ray$ is mapped homeomorphically to $[y]_\ray$ by
  $F$. 

  \smallskip
  Consider the restriction to the equator, i.e., $[x]_\infty=
  [x]_\ray \cap\Seninfty$ and $[y]_\infty = [y]_\ray \cap \Seninfty$.  
  It follows from Proposition~\ref{prop:ray-connections}
  that $[y]_\infty$ contains only finitely many points, i.e.,
  $\#[y]_\infty=n <\infty$. Furthermore from
  the above it follows that $F|\Seninfty$ maps $[x]_\infty$ to
  $[y]_\infty$ homeo\-morphically. Recall that $F|\Seninfty$ is
  (conjugate to) the map $z^d \colon \Sen \to \Sen$.
  
  \smallskip
  For each angle $\xi\in \Seninfty$ let $V(\xi)\subset \Seninfty$ 
  be an open arc containing $\xi$ (i.e., an open connected
  neighborhood of $\xi$). Let $V=V[y]= \bigcup_{\xi\in [y]_\infty}
  V(\xi)$. Here the sets $V(\xi)$ in this union are chosen to be
  pairwise disjoint.
  Consider now the set
  $V' = (F|\Seninfty)^{-1} (V)$ (i.e., the preimage of $V$ under the
  map $z^d \colon \Sen \to \Sen$). This 
  set consists of $nd$ open arcs (in $\Seninfty$). Given any point
  $\zeta\in (F|\Seninfty)^{-1}([y]_\infty)$,  
  there is a component $V'(\zeta)$ of $V'$ which is a neighborhood
  of $\zeta$. Conversely each component of $V'$ is a neighborhood
  of such a point $\zeta$. Let $V'[x]:= \bigcup_{\zeta\in [x]_\infty}
  V'(\zeta)$. From the above it follows that $F|\Seninfty\colon V'[x]
  \to V[y]$ is a homeomorphism.

  \smallskip
  We can find an open neighborhood $U=U[y]\subset \Cbar_\wt
  \uplus \Cbar_\bt$ of $[y]_\ray$ such that $U\cap \Seninfty= V$. 
  
  \smallskip
  Consider now the saturated interior $U^*=U^*[y]$ of $U$, i.e., the
  set of all ray-equivalence classes contained in $U[y]$. By
  Lemma~\ref{def:closed_eq}~\eqref{item:closed_saturint} it follows
  that $U^*$ is an open, saturated neighborhood of $[y]_\ray$. It
  follows that each preimage $U^*[x]$ of $U^*$ by $F$ is an open,
  saturated neighborhood of some preimage $[x]_\ray$ of
  $[y]_\ray$ by $F$. Note that $F|\Seninfty$ maps 
  $U^*[x]\cap\Seninfty$, i.e. $V'[x]$,  
  homeomorphically to $U^*\cap\Seninfty$, i.e. to $V$. 
  It follows that $U^*[x]$ is mapped homeomorphically to $U^*$ by $F$.  
 
  Let
  $[U^*]\subset (\Cbar_\wt \uplus \Cbar_\bt)/\!\sim_\ray\,= K_\wt \mate
  K_\bt$ be the image of $U^*$ under the quotient map. This is an open
  neighborhood of $[y]_\ray\in (\Cbar_\wt \uplus
  \Cbar_\bt)/\sim_\ray$. Each component $[U^*[x]]$ of $f^{-1}[U^*]$ is
  the image of a saturated set $U^*[x]$ under the quotient map. Note that $f$
  maps $[U[x]]$ homeomorphically to $[U^*]$. 

  \smallskip
  Let $\mathcal{V}$ be the set of ray-equivalence classes that contain
  critical values, $[\mathcal{V}]\subset (\Cbar_\wt \uplus
  \Cbar_\bt)/\!\simr\,= K_\wt \mate K_\bt$
  be image of this set under the quotient map. We have shown that $f
  \colon (K_\wt \mate K_\bt)\setminus f^{-1}[\mathcal{V}] \to (K_\wt \mate
  K_\bt)\setminus [\mathcal{V}]$ is a covering map. Thus $f$ is a
  branched covering. 

  \smallskip
  That $f$ is orientation preserving follows from the fact the formal
  mating $F$ is orientation-preserving and $f$ may be viewed as a
  pseudo-isotopic deformation of $F$. 
\end{proof}

\subsection{Conformal mating revisited}
There are also stronger notions of Conformal/Geometric Mating in use. 
We start with the original definition of Douady and Hubbard, 
which is easily seen to be equivalent to geometric mateability as defined 
in \defref{def:matings_mate_rationals}. 
This definition was used by Zakeri and Yampolsky 
(see \thmref{thm:zakeriyampolsky} below):

\DEF[Conformal Mating Ia]\label{def:origmatdef}
A rational map {\mapfromto R \Chat \Chat} of degree $d>1$ 
is the conformal mating of two degree $d$ polynomials $P_\wt, P_\bt$ 
with connected and locally connected filled-in Julia sets $K_\wt, K_\bt$, 
if and only if there exists two semi-conjugacies
$$
\phi_i : K_i \to \Chat, \quad\textrm{with}\quad \phi_i\circ P_i = R\circ \phi_i,
$$
conformal in the interior of the filled Julia sets, with 
$\phi_\wt(K_\wt)\cup\phi_\bt(K_\bt) = \Chat$
and with $\phi_i(z) = \phi_j(w)$ for $i,j\in \{\wt,\bt\}$ if and only
if $z\sim w$. Here $\sim$ is the equivalence relation on $K_\wt\sqcup
K_\bt$ which defines the topological mating, i.e., the one defined in
Section~\ref{sec:defin-topol-mating}. 
\ENDDEF

The reader should compare the previous definition with
Lemma~\ref{lem:mating_semi_conj}. 

\RREFDEF{Conformal Mating II}{def:strongconfmat}
A rational map {\mapfromto R \Chat \Chat} of degree $d>1$ 
is the conformal mating of two degree $d$ polynomials $P_\wt, P_\bt$ 
with connected and locally connected filled-in Julia sets $K_\wt, K_\bt$,  
if and only if there exists disjoint conformal embeddings $K_\wt^0, K_\bt^0$ in $\Chat$
of $K_\wt, K_\bt$ and a pseudo-isotopy $H$, 
such that each $H_t$, $t<1$ is conformal on the interior of $K_\wt^0\cup K_\bt^0$ 
and such that $\phi_i=H_1|_{K_i^0}$, $P_i$, $i=\wt,\bt$ and $R$ 
realizes the conformal mating Ia of \defref{def:origmatdef}.
\ENDDEF

At least in case there exists polynomials with locally connected, positive area Julia sets carrying 
an invariant line field, one should consider also the following strengthened version of 
conformal mating.

\RREFDEF{Conformal Mating III}{def:rstrongconfmat}
A rational map {\mapfromto R \Chat \Chat} of degree $d>1$ 
is the conformal mating of two degree $d$ polynomials $P_\wt, P_\bt$ 
with connected and locally connected filled-in Julia sets $K_\wt, K_\bt$,  
if and only if there exists disjoint conformal embeddings $K_\wt^0, K_\bt^0$ in $\Chat$
of $K_\wt, K_\bt$ and a pseudo-isotopy $H$, 
such that each $H_t$, $t<1$ is quasi-conformal with complex dilatation $0$ 
a.e. on $K_\wt^0\cup K_\bt^0$ 
and such that $\phi_i=H_1|_{K_i^0}$, $P_i$, $i=\wt,\bt$ and $R$ 
realizes the Conformal Mating Ia of \defref{def:origmatdef}.
\ENDDEF

\subsection{Mating Questions}
We may summarize the basic mating questions as follows:
\begin{itemize}
  \item
  When is the equivalence relation $\sim_\ray$ closed, i.e., when is there no Hausdorff obstruction?
  \item
  When is there no Moore obstruction?
  \item
  If $K_\wt \mate K_\bt$ is homeomorphic to $\Sto\sim \Cbar$, 
  when is there then a homeomorphism which conjugates $P_\wt \mate P_\bt$ to a rational map?
  \item
  Are the diameters of the equivalence classes of $\sim_\ray$ always finite? 
  Or equivalently are the equivalence classes of $\sim$ always finite?
  \item
  If bounded can they be of arbitrary size?
 (This is the question of existence of long ray-connections)
\end{itemize}  

\subsection{Existence of Matings}
To show that the theory of matings is not vacuous let us mention a few of the 
mating results obtained so far. The first result obtained by M.~Rees,
Tan Lei, and M.~Shishikura (see \cite{MR1149864}, \cite{TanMating},
\cite{Shishikura}) concerns the mating of quadratic post-critically
finite polynomials. 

\THM[Tan Lei, Rees, Shishikura]
Let $P_\wt(z) = z^2 +c_\wt$ and $P_\bt(z) = z^2 +c_\bt$ be two post-critically finite 
quadratic polynomials. 
Then $P_\wt$ and $P_\bt$ are conformally mateable (in the strong sense II) if and only if 
$c_\wt$ and $c_\bt$ do not belong to conjugate limbs of the Mandelbrot set. 
Moreover if mateable the resulting rational map is unique up to {\Mobius}-conjugacy.
\ENDTHM

One implication in the above theorem is relatively easy to see.
Namely if $c_\wt, c_\bt$ belong to conjugate limbs of the Mandelbrot
set, the corresponding polynomials $P_\wt, P_\bt$ are not matable. Let
$c_\wt$ be in the $p/q$ and $c_\bt$ be in the $-p/q$ limb. 
Then the unique $p/q$-cycle of rays for $P_\wt$ is identified with the unique $-p/q$-cycle 
of rays for $P_\bt$. Each ray in the cycles connects a fixed point $\al_\wt$ for $P_\wt$ 
to a fixed point $\al_\bt$ for $P_\bt$. 
The corresponding ray-equivalence class is cyclic, so that the polynomials are 
not even topologically mateable by
Proposition~\ref{prop:ray-connections}. 

Matings of polynomials that are not post-critically finite are much
more difficult to understand. An important result in this setting was proved by
M.~Yampolsky and S.~Zakeri (see \cite{YamZak}). 

\THM[Yampolsky and Zakeri]
\label{thm:zakeriyampolsky}
Suppose $P_\wt,P_\bt$ are quadratic polynomials which are not anti-holomorphically conjugate 
and each with a bounded type Siegel fixed point. 
Then $P_\wt$ and $P_\bt$ are geometrically mateable in the sense of 
\defref{def:origmatdef}.
\ENDTHM 
Here bounded type Siegel fixed point means that the arguments 
$\theta_i\in\R/\Z$ of the corresponding multipliers $\lambda_i=\exp(i2\pi\theta_i)$ 
have continuous fraction expansions with uniformly bounded partial fractions.

On the other hand, regarding the question whether a rational map
arises as a mating of polynomials, we have the following result
obtained in \cite{exp_quotients}, see also \cite{inv_Peano} and
\cite{unmating}. 

\THM[Meyer]
\label{thm:mey_mate}
Let $R\colon \Chat\to \Chat$ be a post-critically finite rational map
such that its Julia set is the whole sphere $\Chat$. Then every
sufficiently high iterate $R^n$ of $R$ arises as a mating (i.e., is
topologically conjugate to the topological mating of two polynomials).
\ENDTHM

In fact the previous statement remains true for \emph{expanding Thurston
  maps}. 

\section{Slow Mating}

Milnor defined in \cite{Milnor:QuadRatMaps} a notion of mating intermediate 
to the topological and the formal mating and depending on a complex parameter 
$\la\in\C\Sm\Dbar$. 
This notion has been explored recently by Buff and Cheritat 
(see also Cheritats contribution to this volume). 

The advantage of Milnors construction is that it constructs directly rational maps 
$\whR_\la$ of the right degree. Moreover the domain and range comes with holomorphically embedded copies of the filled Julia sets. 
The disadvantage is that the domain and the range of the map are not the same and 
in particular the embedded copies of the filled Julia sets in the domain and range are different.
The domain and range can however be identified via a 
$d$-quasi conformal homeomorphism $\chi_\la$, 
which has complex dilatation zero a.e.~on the embedded copies of the filled Julia sets. 
The composition $\wtR_\la=\chi_\la^{-1}\circ \whR_\la$ 
is a degree $d$, $d$-quasi-regular dynamical system, 
which preserves and is conformal on the embedded copies of the filled Julia sets. 
And which is a $d$-quasi-regular 
degree $d$ covering of the separating annulus $C^\la$ to itself.
Moreover the restriction of $\wtR_\la$ to the filled Julia sets has polynomial-like 
(in particular holomorphic) extensions, 
which are conformally conjugate to appropriate polynomial-like restrictions of the polynomials.

\subsection{Definition of the slow mating}
\label{sec:defin-slow-mating}

Let $P_\wt, P_\bt$ be two monic polynomials of degree $d$ with
connected filled Julia sets $K_\wt, K_\bt$. We do however not assume
that $K_\wt, K_\bt$ are locally connected. As in
Theorem~\ref{thm:boettcher} we denote by $\varphi_i\colon \Chat\setminus
\Dbar\to \Chat\setminus K_i$ the B\"{o}ttcher conjugacy (which
conjugates $z^d$ to $P_i$) for $i=\wt, \bt$.  
Recall that the Green's function $G_i$ of $K_i$ (with pole at $\infty$) 
is the subharmonic function given by 
$$
G_i(z) =
\begin{cases}
0,\qquad\textrm{if}\quad z\in K_i\\
\log\abs{\varphi_i^{-1}(z)},\qquad\textrm{if}\quad z\notin K_i.
\end{cases}
$$
For $t > 0$ write
$$
U_i^t := \{z\in\C \;|\; G_i(z) < 2t \}, \qquad i = \wt,\bt
$$
Then {\mapfromto {P_i} {U_i^{(t/d)}} {U_i^t}} is polynomial-like of
degree $d$, i.e.,  
$U'=U_i^{(t/d)} \subset\subset U = U_i^t$ are isomorphic to $\D$ and $P_i$ is 
holomorphic and proper of degree $d$.

Fix any $\lambda\in\C\Sm\Dbar$, write $t =\log|\lambda| > 0$. Consider
the annulus $A=A(\lambda^2)=\{w\in \C \mid 1<w <
\abs{\lambda^2}\}$. 
Note that {\mapfromto{\varphi_i} {A(\lambda^2)}{U^t_i\setminus K_i}} 
is a biholomorphic map. 
And denote by {\mapfromto {\iota_\La}{A(\lambda^2)}{ A(\lambda^2)}} 
the biholomorphic involution $\iota_\la(w)=\lambda^2/w$, which fixes $\la$.  
We identify the two annuli $U^t_\wt\setminus K_\wt, U^t_\bt\setminus K_\bt$ 
via the {\Bottcher} maps $\varphi_i$ and $\iota_\La$, 
so that ``the inner boundary of the first is identified with the outer of the second''
(and vice versa).     
Formally we let $\simla$ be the equivalence relation on 
$U_\wt^t \sqcup U_\bt^t$ generated by 
\begin{equation}
  \label{eq:def_lambda_sim}
  \varphi_\wt(w)\simla \varphi_\bt(\iota_\la(w)),
\end{equation}
for all $w\in A(\lambda^2)$.
Or equivalently for $z_\wt\in U_\wt^t\Sm K_\wt$ and $z_\bt\in U_\bt^t\Sm K_\bt$:
$$
z_\wt\simla z_\bt \quad\Longleftrightarrow\quad
\varphi^{-1}_\wt(z_\wt)\cdot\varphi^{-1}_\bt(z_\bt) = \la^2.
$$
Define $\Chat^\la = (U_\wt^t \sqcup U_\bt^t)/\?\simla$. 
As usual let {\mapfromto {\Pi^\la} {U_\wt^t \sqcup U_\bt^t}{\Chat^\la}} 
denote the natural projection. 
Furthermore equip  $\Chat^\la$ with the complex structure 
given by the two local parameters 
{\mapfromto {\Pi_i^\la=\Pi^\la} {U_i^t} {\Chat^\la}}, $i=\wt,\bt$. 
Then the change of coordinates $U^t_\wt \setminus
K_\wt \to U^t_\bt\setminus K_\bt$  is given by
$\varphi^{-1}_\bt\circ\iota_\la\circ\varphi_\wt(z)$.
The Riemann surface $\Chat^\la$ is simply connected and compact, 
hence isomorphic to $\Chat$. 

\smallskip
For each $\la\in\C\Sm\Dbar$ the polynomials $P_\wt,P_\bt$ induce a proper 
degree $d$ holomorphic map {\mapfromto {R_\la} {\Chat^\la}{\Chat^{\la^d}}} 
given by 
$$
R_\la(w) =
  \begin{cases}
  \Pi_\wt^{\la^d}\circ P_\wt(z), 
  \qquad &\textrm{if } w = \Pi_\wt^\la(z),\\
  \Pi_\bt^{\la^d}\circ P_\bt(z), 
  \qquad &\textrm{if } w = \Pi_\bt^\la(z).
  \end{cases}
$$
This map is well defined and hence proper holomorphic of degree $d$,
since it follows from (B\"{o}ttcher's) Theorem~\ref{thm:boettcher}
that for all $z_\wt\in U^t_\wt\setminus K_\wt, z_\bt \setminus
U^t_\bt$ it holds
\ALIGN
\Pi_\wt^\la(z_\wt) = w= \Pi_\bt^\la(z_\bt)\quad&\Longleftrightarrow\quad 
\varphi^{-1}_\wt(z_\wt)\cdot\varphi^{-1}_\bt(z_\bt) = \la^2\quad \Longrightarrow\\
\varphi^{-1}_\wt(P_\wt(z_\wt))\cdot\varphi^{-1}_\bt(P_\bt(z_\bt)) = \la^{2d} 
\quad&\Longleftrightarrow\quad
\Pi_\wt^{\la^d}\circ P_\wt(z_\wt) = \Pi_\bt^{\la^d}\circ
P_\bt(z_\bt). 
\end{align*}
If we fix conformal isomorphisms {\mapfromto {\eta^\la} {\Chat^\la}{\Chat}}, 
and express $R_\la$ in these coordinates, then $R_\la$ becomes a family of rational maps. 
These are however a priori only defined up to pre- and post-composition 
by {\Mobius} transformations. We shall return to this discussion later.

Again it should be emphasized that this definition is well
defined even for polynomials whose Julia sets are not locally
connected. Thus this may serve as a starting point to define matings
in this setting.

\subsection{Equivalence to the Topological Mating}
\label{sec:equiv-topol-mating}

The topological mating may be recovered from {\mapfromto {R_\la}
  {\Chat^\la}{\Chat^{\la^d}}}, $\la>1$ in a manner analogous to how the
topological mating was obtained from the formal mating in
Proposition~\ref{prop:top_vs_formal}. 

\smallskip
Let $P_\wt, P_\bt$ be monic polynomials of degree $d$ with connected
and locally connected Julia sets. Recall that $R_i(\zeta)\subset
\C\setminus K_i$ denotes the external ray with angle $\zeta\in \Sen$ for
$i=\wt, \bt$. Let $\la$ be real, i.e., $\la>1$. 
Let $\simla_M$ be the smallest equivalence relation on $\Chat^\la$ for which 
for each $\zeta\in\Sen$ the set 
$$
  \overline{\Pi^\la(R_\wt(\zeta)\cap U_\wt^t)} = 
  \overline{\Pi^\la(R_\bt(\overline{\zeta})\cap U_\bt^t)}
$$
is contained in one equivalence class.
Then $R_\la$ semi-conjugates the equivalence relation $\simla_M$ to $\simlad_M$, 
i.e. $z\simla_M z'$ implies $R_\la(z)\simlad_M R_\la(z')$.

Let $K_\wt \mate_M^\la K_\bt = \Chat^\la/\?\simla_M$ with natural projections 
{\mapfromto {\Pi^\la_M} {\Chat^\la} {K_\wt \mate^\la_M K_\bt}} and 
define for $\la>1$ 
$$
{\mapfromto {P_\wt \mate^\la_M P_\bt} {K_\wt \mate^\la_M K_\bt} 
{K_\wt \mate^{\la^d}_M K_\bt}} 
$$
as the mapping induced by $R_\la$, i.e., the top square below commutes
\begin{equation}
\label{eq:Slow_equiv_Top}
  \begin{CD}
  {\Chat^\la} @>{R_\la}>> {\Chat^{\la^d}}\\
  @V{\Pi^\la_M} VV      @VV{\Pi^{\la^d}_M}V\\
  {K_\wt \mate^\la_M K_\bt} @>{P_\wt \mate^\la_M P_\bt}>> 
  {K_\wt \mate^{\la^d}_M K_\bt}
  \\
  @V{h_\la} VV      @VV{h_{\la^d}}V\\
  {K_\wt \mate K_\bt} @>{P_\wt \mate P_\bt}>> {K_\wt \mate K_\bt}.
  \end{CD}
\end{equation}
Then for $\la>1$ the spaces $K_\wt \mate^\la_M K_\bt$ 
are canonically homeomorphic to $K_\wt \mate K_\bt$ by homeomorphisms 
$h_\la$ such that $h_{\la^d}\circ ({P_\wt \mate^\la_M P_\bt}) \circ h_\la^{-1} 
= P_\wt \mate P_\bt$ is the topological mating (i.e., ~the bottom square commutes).
The proof is similar to that of \lemref{prop:top_vs_formal} and is left to the reader.

\subsection{Choosing conformal isomorphisms}
In this section we show that we may choose the isomorphisms 
{\mapfromto {\eta^\la} {\Chat^\la}{\Chat}} such that 
the images of the filled Julia sets move holomorphically in $\Chat$
with $\la\in\C\Sm\Dbar$.  

\smallskip
To normalize the motion we will choose three distinguished points in
$\Chat^\lambda$. Fix two points $z_\wt\in K_\wt$, $z_\bt\in K_\bt$ 
and let $w_\wt(\lambda):=
\Pi^{\la}(z_\wt)$, $w_\bt(\lambda) :=
\Pi^{\la}(z_\bt)$ be their images in $\Chat^\lambda$. 
The third point is chosen as follows. Recall
from \eqref{eq:def_lambda_sim} that $\varphi_\wt(\lambda)
\overset{\lambda}{\sim} \varphi_\bt(\lambda^2/\lambda) =
\varphi_\bt(\lambda)$. Let $w_1(\lambda):=
\Pi^{\la}(\varphi_\wt(\la))=\Pi^{\la}(\varphi_\bt(\la))$. 

We shall choose $\la_0 = \e^1$ as the base point of the motion. 
For $\La>0$ and $K_i$ locally connected we
can choose the points $w_\wt(\lambda_0), w_\bt(\lambda_0),
w_1(\lambda_0)\in \Chat^{\lambda_0}$ so that they are not identified under the
equivalence relation from Section~\ref{sec:equiv-topol-mating}, i.e.,
are in different equivalence classes of $\simlaz_M$. 
That is $\varphi_\wt(\e)=\varphi_\bt(\e), z_\wt, z_\bt$ are
not ray-equivalent in the sense of Definition~\ref{def:ray-equivalence}. 
This is however not necessary for most of the following discussion.

\smallskip
For $\la\in\C\Sm\Dbar$ let {\mapfromto {\eta_\la} {\Chat^\la} \Chat}
be the conformal isomorphism normalized by
$$
\eta_\la(w_\wt(\lambda)) = 0,\quad \eta_\la(w_\bt(\lambda)) = \infty, \textrm{ and } 
\eta_\la(w_1(\lambda)) = 1, 
$$
and define $K_\wt^\la = \eta_\la(\Pi^\la(K_\wt))$, $K_\bt^\la = \eta_\la(\Pi^\la(K_\bt))$.

\REFTHM{thm:Kholomorphicmotion}
The map {\mapfromto M {(\C\Sm\Dbar)\times(K_\wt^{\la_0}\cup K_\bt^{\la_0})} {\Chat}} 
given by 
$$M(\la,z) = \eta_\la\circ\Pi^\la\circ(\Pi^{\la_0})^{-1}\circ\eta_{\la_0}^{-1}(z)
$$ 
is a holomorphic motion with base point $\la_0$, i.e., 
\ENUM
\item
$M(\lambda_0,\cdot)= \Id$ on $K_\wt^{\la_0}\cup K_\bt^{\la_0}$.
\item
For each fixed $z\in K_\wt^{\la_0}\cup K_\bt^{\la_0}$ the map 
$\C\Sm\Dbar\ni\la\to M(\la,z)$ is holomorphic.
\item
For each fixed $\la\in\C\Sm\Dbar$ the map 
$K_\wt^{\la_0}\cup K_\bt^{\la_0}\ni z\mapsto M(\la,z)$ is injective.
\ENDENUM
\ENDTHM
\COR
The family of rational maps 
{\mapfromto {\whR_\la = \eta_{\la^d}\circ R_\la\circ\eta_\la^{-1}} \Chat \Chat} 
depends holomorphically on $\la\in\C\Sm\Dbar$.
\ENDCOR
\PROOF
The zeros and poles of $\whR$ depend holomorphically on $\la$ and $\whR$ fixes $1$.
\ENDPROOF
\REFCONJ{conj:Milnormating}
If the family of degree $d$ rational maps $\whR_\la$, $\la>1$ 
has a limit $R$ of degree $d$, as $\la \searrow 1$, 
then $R$ is a conformal mating of $P_\wt$ and $P_\bt$ in the strongest sense, 
\defref{def:rstrongconfmat}.
\ENDCONJ
Before we proceed to a proof of the theorem, let us introduce a few facts about polynomials 
with connected filled Julia set.
It is well known see e.g. \cite{BrannerandHubbard} or \cite{PetersenandTan} 
that the almost complex structures on $\Chat$ which are given by 
the Beltrami forms 
$$
\sigma_\La =\mu_\Lambda(z)\frac{\dzbar}{\dz}, 
\qquad  
\mu_\Lambda(z) = \frac{\Lambda-1}{\Lambda+1}\frac{z}{\zbar}, \quad z\in\Cstar
$$
are invariant under $z\mapsto z^k$ for every $\Lambda\in\Hplus$ and 
every $k\in\Z\Sm\{0\}$ and under $z\mapsto\al z$ for every $\al\in\Cstar$.  
(Note that contrary to most conventions $\si_1\equiv 0$.)
Moreover the integrating q-c homeomorphism for $\sigma_\La$, 
that fix $0$, $1$ and $\infty$ is the map $\zeta_\La(z) = z|z|^{(\Lambda-1)}$.
It restricts to the identity on the unit circle and conjugates $z\mapsto z^d$ to itself. 
In fact the lift to the logarithmic coordinate on $\Cstar$ which fixes $0$ is the 
real-linear map fixing $i$ and sending $1$ to $\Lambda$. 
The maps $\zeta_\La$ form a group with neutral element $\zeta_1=\Id$, with 
$\zeta_{\La+it}=\zeta_\La\circ\zeta_{1+it}$ for all $\La\in\H$ and $t\in\R$ and 
with $\zeta_{ss'} = \zeta_s\circ\zeta_{s'}$ for all $s, s' >0$. 
And thus defines a group action on $\Chat$.
Consequently for every polynomial $P$ with connected filled Julia set $K$, 
and every $\La\in\H$ we obtain a $P$-invariant almost complex structure on $\Chat$ 
with Beltrami form $\sigma^P_\Lambda=\mu^P_\Lambda\frac{\dzbar}{\dz}$, 
where $\sigma^P_\La$ and hence $\mu^P_\La$ is equal to $0$ on $K_P$ and equal to 
$\psi_P^*(\sigma_\Lambda)$ on $\C\Sm K$, 
i.e.
$$
\mu^P_\La = 
\frac{\Lambda-1}{\Lambda+1}
\frac{\psi_P(z)\overline{\psi_P'(z)}}{\overline{\psi_P(z)}\psi_P'(z)},
\quad c\in\C\Sm K,
$$
where $\psi_P=\varphi_P^{-1}$. 
For this almost complex structure the map 
{\mapfromto {\zeta_\Lambda^P} \Chat\Chat} given by
$$
\zeta_\Lambda^P(z) =
\begin{cases}
z,\quad &z\in K_P\\
\varphi_P\circ\zeta_\La\circ\psi_P(z),\quad &z\notin K_P
\end{cases}
$$
is continuous and hence an integrating q-c homeomorphism. 
It acts on points in $\C\Sm K$ by multiplying potential $t'$ by $\Re(\Lambda)$ 
and adding $\Im(\Lambda)t'$ to the argument. 
In particular for $\Lambda$ real $\zeta_\Lambda^P$ 
preserves rays and maps points of potential $t'$ 
to points of potential $\Lambda t$. 
By construction the maps $\zeta_\Lambda^P$ form a group under composition. 
And this group is canonically isomorphic to the group formed by the maps 
$\zeta_\La$ under composition, as follows from the formula above (see
also \cite{PetersenandTan} for further details).  

Fix $\lambda_0 = \e^1$ and consider the almost complex structures on 
$\Chat^{\lambda_0}$ given by the Beltrami forms 
$\sigma^{\la_0}_\La = (\Pi^{\la_0}_i)_*(\sigma^{P_i}_\La)$,  
which is supported only on the separating annulus 
$C^{\la_0} = \Pi^{\la_0}_i(U^1_i)$, $i=\wt,\bt$. 
Note that the invariance of $\sigma_\La$ under the involutions $\iota_\la$, 
$\sigma_\Lambda = \iota_\la^*(\sigma_\Lambda)$ 
ensures that $\sigma^{\la_0}_\Lambda$ is well defined. 
Moreover $\sigma^\laz_\La$ depends complex analytically on $\Lambda\in\Hplus$, 
since point wise the coefficient function $\mu^\laz_\La$ is a complex scalar multiple 
of norm $1$ or $0$ of the constant $(\La-1)/(\La+1)$.

For each $\La\in\Hplus$ write $\la = \e^\Lambda$ and let 
{\mapfromto {\phi_\La} {\Chat^{\la_0}} \Chat} 
be the integrating homeomorphism for $\sigma^{\la_0}_\Lambda$ 
which is normalized by 
$$
\phi_\La(w_\wt(\laz)) = 0,\quad \phi_\La(w_\bt(\laz)) = \infty, \textrm{ and } 
\phi_\La(w_1(\laz)) = 1.
$$
Then $\phi_1=\eta_{\lambda_0}$ and 
$\phi_\Lambda$ depends holomorphically on $\Lambda$, 
by the Ahlfors-Bers Theorem for almost complex structures depending 
analytically on a complex parameter. 
\REFTHM{thm:fullholomorphicmotion}
The map {\mapfromto H {\Hplus\times\Chat} {\Chat}} 
given by $H(\Lambda,z) = \phi_\Lambda\circ\phi_1^{-1}(z)$ is a holomorphic motion 
with base point $1$ and its ($z$-variable) restriction to 
$(K_\wt^{\la_0}\cup K_\bt^{\la_0})$ is a $2\pi i$ 
periodic holomorphic motion (in the $\Lambda$-variable). 
\ENDTHM
\PROOF
The only statement not justified already is the $2\pi i$ periodicity. 
However for $\La' = 1+2\pi i$ the integrating map $\zeta_{\La'}$ 
for $\si_{\Lambda'}$ restricts to the identity 
on the circles of center $0$ and radius $\e^k$, $k\in\Z$. 
As a consequence $\zeta_{\La'}^{P_i}$ restricts to the identity on $K_i$, on 
the equipotential $\varphi_i(\e\Sen)$ containing $\varphi_i(\laz)$ and on the 
boundary of $U_i^1$ for $i=\wt, \bt$. 
It follows that the map {\mapfromto {\phi} {\Chat^{\la_0}}{\Chat^{\la_0}}} 
given by 
$$
\phi(z) = \Pi^\laz_i\circ\zeta^{P_i}_{\La'}\circ(\Pi^\laz_i)^{-1}(z)
$$
is an integrating qc-homeomorphism for $\si_{\Lambda'}^{\la_0}$ 
which is the identity on $\Pi^{\la_0}(K_\wt\sqcup K_\bt)$ union the 
core geodesic $\Pi_i^\laz(\varphi_i(\e\Sen))$ of the separating annulus $C^\laz$. 
In particular $\phi$ fixes $w_1(\laz), w_\wt(\laz)$ and  $w_\bt(\laz)$.
Thus $\phi_{\La'} = \phi_1\circ\phi$ and $\phi_{\La'} = \phi_1$ on 
$\Pi^{\la_0}(K_\wt\cup K_\bt)$. The $2\pi i$ periodicity then follows because 
the maps $\zeta_\La$ form a group action on $\Cstar$, so that 
$\phi_{\La'} = \phi_1\circ\phi$ implies 
$\phi_{\La+i2\pi} = \phi_\La\circ\phi$ for all $\La\in\H$.
\ENDPROOF
\REFCOR{cor:lambdaKmotion}
The map {\mapfromto {\widehat{H}} 
{(\C\Sm\Dbar)\times(K_\wt^{\la_0}\cup K_\bt^{\la_0})} \Chat} given by 
$\widehat{H}(\la,z) := H(\log\la,z)$ is a holomorphic motion with base point $\la_0$.
\ENDCOR
\REFTHM{thm:equalityonK}
For every $\Lambda\in\Hplus$ and $\la=\e^\Lambda$ 
$\eta_\la\circ\Pi^\la=\phi_\La\circ\Pi^{\la_0}$ on $K_\wt\sqcup K_\bt$ 
and the map 
{\mapfromto {\eta_\la^{-1}\circ\phi_\La} {\Chat^{\la_0}}{\Chat^{\la}}}
is a quasi conformal homeomorphism with complex dilatation $0$ a.e.~on 
$\Pi^{\la_0}(K_\wt\sqcup K_\bt)$. 
Moreover for $\La>0$ and $K_\wt, K_\bt$ locally connected 
this map preserves the equivalence classes of $\simla_M$.
\ENDTHM
\PROOF
For $i=\wt,\bt$ and $t=2\Re(\La)$ the maps 
$$
{\mapfromto {\xi_i=\phi_\La\circ\Pi_i^{\la_0}\circ(\zeta_\Lambda^{P_i})^{-1}}
{U_i^t} {\Chat}}
$$ 
are quasi conformal with complex dilatation $0$ and hence biholomorphic. 
Moreover they satisfy
$$
\forall z_\wt\in U^t_\wt\Sm K_\wt,\forall z_\bt\in U^t_\bt\Sm K_\bt:\qquad
\xi_\wt(z_\wt)=\xi_\bt(z_\bt) \Longleftrightarrow
\varphi_\wt(z_\wt)\varphi_\bt(z_\bt)= \la^2
$$
and $\xi_i=\phi_\La\circ\Pi_i^{\la_0}$ on $K_i$, 
because $\zeta_\La^{P_i} = \Id$ on the filled Julia sets $K_i$. 
Thus it follows from the normalizations that $\xi_i=\eta_\la\circ\Pi_i^\la$ 
so that
$\eta_\la\circ\Pi^\la=\phi_\La\circ\Pi^{\la_0}$ on $K_\wt\sqcup K_\bt$ 
and hence the map 
{\mapfromto {\eta_\la^{-1}\circ\phi_\Lambda} {\Chat^{\la_0}}{\Chat^{\la}}}
is a quasi conformal homeomorphism, 
which has complex dilatation $0$ a.e.~on this set. 
Because $\zeta_\Lambda^{P_i} = \Id$ on the filled Julia sets $K_i$. 
$$
 \xymatrix{
    U^1_i, K_i \quad\ar[r]^{\zeta_\Lambda^{P_i}, \Id}
    \ar[d]_{\Pi_i^\laz}
    &
    \quad U_i^t, K_i\ar[d]^{\Pi_i^\la}
    \\
    \Chat^\laz, \Pi_i^\laz(K_i)\ar[d]_{\phi_1=\eta_\laz}
    \ar[r]^{\eta_\la^{-1}\circ\phi_\La}\quad\ar[dr]^{\phi_\La} 
    &
    \Chat^\la, \Pi_i^\la(K_i)\ar[d]^{\eta_\la}\\
    \Chat, K_i^\laz
    &
    \Chat, K_i^\la.
    }
$$
Finally preservation of $\simla_M$ follows, because $\zeta_\Lambda$ 
preserves lines through the origin, when $\Lambda>0$.
\ENDPROOF
\PROOF (of \thmref{thm:Kholomorphicmotion}):
Theorem~\ref{thm:Kholomorphicmotion} follows immediately by combining 
\corref{cor:lambdaKmotion} and \thmref{thm:equalityonK} since for 
$z\in K_\wt^{\la_0}\cup K_\bt^{\la_0}$
$$
M(\la,\cdot) = \eta_\la\circ\Pi^\la\circ(\Pi^{\la_0})^{-1}\circ\eta_{\la_0}^{-1} =
\phi_\La\circ\Pi^\laz\circ (\Pi^{\la_0})^{-1}\circ\phi_1^{-1} = 
\phi_\La\circ\phi_1^{-1}.
$$
\ENDPROOF
\REFTHM{thm:BCcomparison}
For every $\Lambda>0$ and $\la=\e^\Lambda$ the map 
{\mapfromto {\chi_\la} {\Chat}{\Chat}} given by 
$\chi_\la = \varphi_{d\Lambda}\circ\varphi_\Lambda^{-1}$ is a 
$d$-quasiconformal homeomorphism, with 
$$
\chi_\la(K_\wt^\la\cup K_\bt^\la) = 
K_\wt^{\la^d}\cup K_\bt^{\la^d}
$$ 
and with complex dilatation $0$ almost everywhere on $K_\wt^\la\cup K_\bt^\la$. 
\ENDTHM
\PROOF
For $\La>0$ the maps $z\mapsto z|z|^{(\Lambda'-1)}$ form a commutative 
group action on $\Cstar$. 
Thus it follows that the real dilatation of the qc-homeomorphism 
$\chi_\la = \phi_{d\Lambda}\circ\phi_\Lambda^{-1}$ equals 
the dilation of $\phi_d$ at corresponding points. 
This is $0$ on $\Pi^{\la_0}(K_\wt\sqcup K_\bt)$ 
and an easy computation shows that it equals $d$ 
on the separating annulus $C^{\laz}$.
\ENDPROOF
\COR
For $\La>0$ and $\la=\e^\la$ the  map 
{\mapfromto {\wtR^\la:=\chi_\la^{-1}\circ \whR_\la}{\Chat}{\Chat}} 
is a quasi-regular degree $d$ branched covering which is conformal a.e.~on 
$K_\wt^\la\cup K_\bt^\la$ and which is a $d$-qc covering map from 
$\eta_\la(C^\la)$ to itself. 
Moreover for $i=\wt,\bt$ the map $\wtR^\la$ coincides on $K_i^\la$ 
with the quadratic-like map 
$$
\mapfromto {\wtP^\la_i}{\eta_\la(\Pi^\la_i(U_i^{\La/d}))}
{\eta_\la(\Pi^\la_i(U_i^{\La})) = K^\la_i\cup\eta_\la(C^\la)}
$$
which is conformally conjugate from $P_i$ by $\eta_\la\circ\Pi^\la_i$.
\ENDCOR
Let $\sim_1$ be the equivalence relation on $\Chat$ conjugate by $\eta_{\laz}^{-1}$ 
to $\simlaz$ on $\Chat^{\la_0}$, i.e. $z_1\sim_1 z_2$ if and only if 
$\eta_{\laz}^{-1}(z_1)\simlaz \eta_{\laz}^{-1}(z_2)$. 
In the following proposition we reverse the time orientation of the Moore-isotopy, 
in order to unclutter notation
\PROP
If the restriction {\mapfromto {H} {]0,1]\times\Chat} {\Chat}}  
of the holomorphic motion in \thmref{thm:fullholomorphicmotion}, 
has a continuous extension {\mapfromto \wtH {[0,1]\times\Chat}\Chat}, 
which is a time reversed Moore isotopy for $\sim_1$. 
Then both $\whR_\la$ and $\wtR_\la$, $\la=\e^\Lambda$ converge 
to a (the same) rational map $R_0$ of degree $d$ as $\Lambda\to 0$.
And $R_0$ is a conformal mating of $P_\wt$ and $P_\bt$ in the strong sense of 
\defref{def:rstrongconfmat}.
\ENDPROP
\PROOF
For each $t\in\;]0,1]$ define {\mapfromto {\wtH_t} {[0,1]\times\Chat} \Chat} by 
$\wtH_t(s,z) = \wtH(st,\phi_1\circ\phi_t^{-1}(z)) = \phi_{st}\circ\phi_t^{-1}$. 
Then each $\wtH_t$ is a Moore isotopy for the equivalence relation $\sim_t$ on $\Chat$ 
conjugate to $\simla$ by $\eta_\la^{-1}$, where $\la=\e^t$. 
Let {\mapfromto {\Theta_\la} {K_\wt \mate^\la K_\bt} \Cbar} be the induced
homeomorphism as given by \lemref{lem:sim_dyn}. 
Thus for $0<t<1/d$ we have the commutative diagrams
\begin{equation}
\label{eq:convergence_of_R}
  \begin{CD}
\Chat @<{\eta_\la}<< {\Chat^\la} @>{R_\la}>> {\Chat^{\la^d}} @>\eta_{\la^d}>> \Chat\\
@V{\wtH_t(0,\cdot)}VV  @V{\Pi^\la} VV      @VV{\Pi^{\la^d}}V @VV\wtH_{dt}(0,\cdot)V\\
\Chat @<\Theta_\la<<  {K_\wt \mate^\la K_\bt} 
@>{P_\wt \mate^\la P_\bt}>> {K_\wt \mate^{\la^d} K_\bt} @>\Theta_{\la^d}>> \Chat\\
@.  @V{h_\la} VV      @VV{h_{\la^d}}V @.\\
{} @.  {K_\wt \mate K_\bt} @>{P_\wt \mate P_\bt}>> {K_\wt \mate K_\bt} @. {}
  \end{CD}
\end{equation}
The homeomorphisms 
{\mapfromto {h_\lambda\circ\Theta_\lambda^{-1}} \Chat {K_\wt \mate K_\bt}} are independent of 
$\la$, because for any $z\in K_\wt\sqcup K_\bt$ :
$\wtH_t(0,\eta_\la\circ\Pi^\la(z)) = \wtH(0,\eta_{\la_0}\circ\Pi^{\la_0}(z))$ 
is independent of $\la$. 
Thus by \propref{prop:mating_branched_covering} the map 
$$
R_0=\Theta_{\la^d}\circ h_{\la^d}^{-1}\circ {(P_\wt \mate P_\bt)}\circ h_\la\circ\Theta_\la^{-1}
$$
is a fixed degree $d$ branched covering. 
The map $\wtH$ is uniformly continuous, because it is continuous with compact domain. 
Thus the projections $\wtH_t(0,z)$ converge uniformly to the identity. 
And the holomorphic degree $d$ branched coverings $\whR_\la$ 
converge uniformly to $R_0$, which shows convergence and that $R_0$ is holomorphic. 
Similarly $\chi_\la^{-1}(z)= \wtH_{d\La}(1/d,z)$ converge uniformly to the identity, 
so that $\wtR_\la=\chi_\la^{-1}\circ\whR_\la$ also converges uniformly to $R_0$.
By construction $\wtH\circ\eta_{\la_0}\circ\Pi^{\la_0}_i$, $P_i$ and 
$R_0$ satisfies the requirements of the strong mating definition \defref{def:rstrongconfmat}.
\ENDPROOF

\subsection{Cheritat movies} 
It is easy to see that $\whR_\la$ converges uniformly to the monomial $z^d$ 
as $\la\to\infty$. 
Cheritat has used this to visualize the path of Milnor intermediate matings $\whR_\la$, 
$\la \in ]1,\infty[$ of quadratic polynomials through films. 
Cheritat starts from $\la$ very large so that 
$K_\wt^\la$ and $K_\bt^\la$ are essentially just two down scaled copies 
of $K_\wt$ and $K_\bt$, the first near $0$, the second near $\infty$. 
From the chosen normalization and the position of the critical values in 
$K_\wt^\la\cup K_\bt^\la$ he computes $\whR_{\sqrt{\la}}$. 
From this $K_\wt^{\sqrt{\la}}\cup K_\bt^{\sqrt{\la}}$ can be computed
by pull back of $K_\wt^\la\cup K_\bt^\la$ under $\whR_{\sqrt{\la}}$. 
Essentially applying this procedure iteratively one obtains a sequence of rational 
maps $\whR_{\la_n}$ and sets $K_\wt^{\la_n}\cup K_\bt^{\la_n}$, 
where $\la_n\searrow 1$ and $\la_n^2 = \la_{n-1}$. 
For more details see the paper by Cheritat in this volume.

\section{Appendix: Branched Coverings}
\DEF
A branched covering {\mapfromto F \Sto \Sto} is a map such that:  
For all $x\in\Sto$ there exists local coordinates {\mapfromto \eta {\omega(x)} \C}, 
{\mapfromto \zeta {\omega(F(x))} \C} and $d\geq 1$ such that 
$$
\zeta\circ F \circ \eta^{-1}(z) = z^d.
$$
When $d>1$ above the point $x$ is called a critical point. The set of critical points for $F$ 
is denoted $\Omega_F$.

The branched covering $F$ is called post-critically finite (PCF), 
if the post-critical set
$$
P_F = \{ F^n(x)| x \in \Omega_F, n>0\}
$$
is finite.
\ENDDEF

\subsection{Thurston Equivalence}

\DEF[Thurston Equivalence]
Two post-critically finite branched coverings {\mapfromto {F_1, F_2} \Sto \Sto} 
are said to be \emph{Thurston equivalent} if and only if there exists a pair of 
homeomorphisms {\mapfromto {\Phi_1, \Phi_2} \Sto \Sto} 
isotopic relative to the post critical set of $F_1$ such that 
$$
\begin{CD}
\Sto @>F_1>> \Sto\\
@V \Phi_1 VV      @VV \Phi_2 V\\
\Sto @>F_2>> \Sto.
\end{CD}
$$
\ENDDEF

\subsection{Multicurves}
Let $P\subset\Sto$ be a finite set.
\begin{itemize}
  \item
  A simple closed curve {\mapfromto \ga \Sen {\Sto\sm P}} is called \emph{peripheral} 
  if one of the complementary components $\Sto\sm\ga$ contains at most one point of $P$.
  \item
  A \emph{multi curve} $\Gamma$ in $\Sto\sm P$ is a set or collection of mutually non homotopic, 
  non-peripheral simple closed curves in $\Sto\sm P$.
  \item
  Note that a multi curve has at most $\#P-3$ elements.
\end{itemize}  
\subsection{Thurston matrices}
\begin{itemize}
  \item
  Let {\mapfromto F \Sto \Sto} be a PCF branched covering with post critical set $P$, 
  $\#P > 3$
  \item
  A multicurve $\Gamma = \{\ga_1, \ldots, \ga_n\}$ in $\Sto\sm P$ \emph{is $F$-stable} 
  if for every $j$ and every connected component $\delta$ of $F^{-1}(\gamma_j)$, 
  the simple closed curve $\delta$ is either homotopic to some $\ga_i$ or peripheral in 
  $\Sto\sm P$.
  \item
  The \emph{Thurston Matrix} of $F$ with respect to the $F$-stable multicurve $\Gamma$ 
  is the non negative 
  $n\times n$ matrix $A = A_{i,j}$ given by
  $$
  A_{i,j} = \sum_{\delta} 1/\deg(F:\delta\to \ga_j)
  $$
  where the sum is over all connected components $\delta$ of $F^{-1}(\gamma_j)$ 
  homotopic to $\gamma_i$ relative to $P$, in $\Sto\sm P$.
\end{itemize}  
\subsection{Thurston obstructions}
\begin{itemize}
  \item
  Having only non negative entries, the Thurston matrix $A$ has a positive \emph{leading eigenvalue}, 
  i.e. eigenvalue of maximal modulus.
  \item
  A \emph{Thurston obstruction} to $F$ is an $F$-stable multicurve $\Gamma$ 
  with leading eigenvalue of modulus at least $1$.
\end{itemize}  

\subsection{The fundamental theorem for post-critically finite rational maps.}
\THM[Thurston]
Let {\mapfromto F \Sto \Sto} be a post-critically finite branched covering with post-critical set $P$, 
and hyperbolic orbifold. Then $F$ is Thurston equivalent to a rational map if and only if $F$ has 
no Thurston obstruction.
\ENDTHM

The Orbifold of $F$
 
\begin{itemize}
  \item
  The orbifold $\OO_F$ associated to $F$ is the topological orbifold $(\Sto,\nu)$ 
  with underlying space $\Sto$ and whose weight $\nu(x)$ at $x$ is 
  the least common multiple of the local degree of $F^n$ 
  over all iterated preimages $F^{-n}(x)$ of $x$.
  \item
  The orbifold $\OO_F$ is said to be \emph{hyperbolic} if its Euler characteristic
  $\chi(\OO_F)$ is negative, that is if:
  $$
  \chi(\OO_F) := 2 - \sum_{x\in P} (1 -\frac {1}{\nu(x)}) < 0.
  $$
\end{itemize}

\end{document}